\newcommand{\arxiv}[1]{\href{http://arxiv.org/abs/#1}{arXiv:#1 [\BibField{category}]}}
\theoremstyle{plain}
\newaliascnt{cor}{thm}
\newtheorem{cor}[cor]{Corollary}
\newaliascnt{lem}{thm}
\newaliascnt{prop}{thm}
\newtheorem{prop}[prop]{Proposition}
\theoremstyle{definition}
\newaliascnt{defn}{thm}
\newtheorem{defn}[defn]{Definition}
\theoremstyle{remark}
\DeclareMathOperator{\divisor}{div}
\DeclareMathOperator{\exc}{exc}
\DeclareMathOperator{\lct}{lct}
\DeclareMathOperator{\newt}{Newt}
\DeclareMathOperator{\ord}{ord}
\DeclareMathOperator{\spec}{Spec}
\newcommand*{\affspace}{\mathbb{A}}
\newcommand*{\multideal}{\mathfrak{J}}
\newcommand*{\sheaf}[1]{\mathcal#1} 
\newcommand*{\termideal}{\mathfrak{t}}
\newcommand*{\NN}{\mathbb{N}}
\newcommand*{\QQ}{\mathbb{Q}}
\newcommand*{\RR}{\mathbb{R}}
\newcommand*{\ZZ}{\mathbb{Z}}
\begin{document}

\title{A short note on the multiplier ideals of monomial space curves}
\date{\today}
\author{Howard M Thompson}
\address{Mathematics Department\\ 402 Murchie Science Building \\ 303 East Kearsley Street \\ Flint, MI 48502-1950}
\email{\href{mailto:hmthomps@umflint.edu}{hmthomps@umflint.edu}}
\urladdr{\url{http://homepages.umflint.edu/~hmthomps/}}
\thanks{The author would like to thank Karen E. Smith and Roc\'{i}o Blanco as well as too many others to name, for putting up with his near endless and largely fruitless discussion of this topic.}
\subjclass[2010]{Primary: 14F18; Secondary: 14H50, 14M25}
\keywords{Multiplier ideals, toroidal varieties}
\begin{abstract}
  Thompson~(2014) exhibits a formula for the multiplier ideal with multiplier $\lambda$ of a monomial curve $C$ with ideal $I$ as an intersection of a term coming from the $I$-adic valuation, the multiplier ideal of the term ideal of $I$, and terms coming from certain specified auxiliary valuations. This short note shows it suffices to consider at most two auxiliary valuations. This improvement is achieved through a more intrinsic approach, reduction to the toric case.
\end{abstract}
\maketitle

\section{Introduction}

Let $(Y, \Delta)$ be a pair, consisting of a normal variety $Y$ over an algebraically closed field of characteristic zero and a $\QQ$-divisor $\Delta$ such that $K_Y+\Delta$ is $\QQ$-Cartier. Let $\pi:X\rightarrow Y$ be a log resolution of the ideal sheaf $\sheaf{I}\subseteq\sheaf{O}_Y$ that is also a log resolution of the pair $(Y, \Delta)$. That is, $\pi$ is a proper birational morphism such that $X$ is smooth, the union of the exceptional set of $\pi$ and $\pi^{-1}(\Delta)$ is a divisor with simple normal crossing support, and $\sheaf{I}\cdot\sheaf{O}_X=\sheaf{O}_X(-F)$ is also a divisor with simple normal crossing support. In this setting, we define the multiplier ideal of $\sheaf{I}^\lambda$ on the pair $(Y,\Delta)$ to be
\[
    \multideal\left((Y,\Delta),\sheaf{I}^\lambda\right)=\pi_*\sheaf{O}_X(K_X-\lfloor\pi^*(K_Y+\Delta)+\lambda F\rfloor).
\]
This ideal sheaf on $Y$ does not depend upon the choice of log resolution.

In recent years, researchers have begun to study which divisors on a log resolution contribute jumping numbers. See Alberich-Carrami\~{n}ana, \`{A}lvarez Montaner and Dachs-Cadefau~\cite{Alberich-Carraminana2014}, Galindo and Monserrat~\cite{MR2671187}, Hyry and J{\"a}rvilehto~\cite{MR2844818}, Naie~\cite{MR2470185}, Naie~\cite{MR3035120}, Smith and Thompson~\cite{MR2389246}, and Tucker~\cite{MR2592954}. This paper refines the result of Thompson~\cite{MR3168880} by finding a smaller set of divisors that contains all the divisors that contribute jumping numbers for a monomial space curve.

\vref{S:factorizing} of this paper recalls a strengthening of the notion of an embedded resolution of singularities known as a factorizing resolution and uses it to provide a proposition (\vref{P:b-ideal}) about the structure of multiplier ideals. 

\vref{S:toric_case} of this paper recalls the Howald-Blickle Theorem (\vref{P:Blickle}) that provides a formula for the multiplier ideals of a monomial ideal on a normal affine toric variety, provides a reinterpretation (\vref{P:toroidal}) of that theorem, and provides a formula (\vref{P:principal}) for the multiplier ideals of a principal binomial ideal.

\vref{S:space_curves} applies the ideas of the previous sections to refine the result of Thompson~\cite{MR3168880}.

\section{Using factorizing resolutions to compute multiplier ideals} \label{S:factorizing}

\begin{defn}
  Let $Z$ be a generically smooth subscheme of any variety $Y$. A \emph{factorizing resolution} of $Z$ is an embedded resolution $\pi:X\rightarrow Y$ of $Z$ such that
  \[
    \sheaf{I}_Z\cdot\sheaf{O}_X=\sheaf{I}_{\widetilde{Z}}\cdot\sheaf{L}
  \]
  where $\widetilde{Z}$ is the strict transform of $Z$, $\sheaf{L}$ is an invertible sheaf, and the support of $\sheaf{I}_Z\cdot\sheaf{O}_X$ is a simple normal crossings variety. 
\end{defn}

Recall that $\pi$ is an embedded resolution of $Z$ if it is proper birational morphism $\pi:X\rightarrow Y$ such that: $X$ is smooth and $\pi$ is an isomorphism over the generic points of the components of $Z$, the exceptional locus $\exc(\pi)$ of $\pi$ is a divisor with simple normal crossing support, and the strict transform $\widetilde{Z}$ is smooth and transverse to $\exc(\pi)$. For an embedded resolution, we always have $\sheaf{I}_Z\cdot\sheaf{O}_X=\sheaf{I}_{\widetilde{Z}}\cap\sheaf{L}$ for some invertible sheaf $\sheaf{L}$. Here we require the intersection to be a product. Typically, this is achieved by blowing up embedded components of $\sheaf{I}_Z\cdot\sheaf{O}_X$. Here is a theorem on the existence of factorizing resolutions.

\begin{prop}(Theorem~1.2 of Bravo~\cite{MR3031261}, Section~3 of Eisenstein~\cite{Eisenstein2010})
  Let $Z$ be a generically smooth subscheme of any variety $Y$ over an algebraically closed field of characteristic zero such that there exists a birational morphism $\mu_1:Y'\rightarrow Y$ from a smooth variety $Y'$ that is an isomorphism over the generic points of the components of $Z$.  If $D$ is a divisor on $Y'$ with simple simple normal crossing support such that no component of the strict transform of $Z$ is contained in $D$, then there exists a factorizing resolution $\pi:X\rightarrow Y$ of $Z$ that factors through $\mu_1$, $\pi=\mu_2\circ\mu_1$, such that $\widetilde{Z}\cup\exc(\pi)\cup\mu_2^{-1}(D)$ has simple normal crossing support.
\end{prop}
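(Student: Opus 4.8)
The plan is to reduce to the known existence of factorizing resolutions over a \emph{smooth} base — Bravo's Theorem~1.2~\cite{MR3031261} and Eisenstein's construction~\cite{Eisenstein2010} — by passing along $\mu_1$ to the smooth variety $Y'$ and the strict transform $Z'$ of $Z$, and by carrying the prescribed boundary $D$ through the whole resolution process. First, after enlarging $D$ and composing $\mu_1$ with a further blow-up of $Y'$ that turns the closure of the exceptional locus of $\mu_1$ into a divisor and restores a simple normal crossing situation — using centers that avoid the generic points of the components of $Z$ — we may assume that $Y'$ is smooth, that $D$ is a reduced simple normal crossing divisor containing the exceptional locus of $\mu_1$, and that no component of $Z'$ lies in $D$. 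This enlargement is harmless: it does not disturb the hypotheses (the blow-ups are isomorphisms over the generic points of the components of $Z$, and non-containment of components of $Z'$ in $D$ is preserved), and since replacing $D$ by a larger simple normal crossing divisor only enlarges $\mu_2^{-1}(D)$ inside a simple normal crossing divisor, the conclusion for the enlarged $D$ implies it for the original.

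Next, apply the factorizing resolution theorems over the smooth base $Y'$ to the subscheme $Z'$, run so as to keep the accumulated boundary — the transform of $D$ together with the exceptional divisors produced so far — in simple normal crossing position at every stage. This is possible because those algorithms proceed by blowing up smooth centers transverse to the accumulated boundary, and the extra blow-ups needed to turn the embedded intersection $\sheaf{I}_{\widetilde{Z}}\cap\sheaf{L}$ into an honest product $\sheaf{I}_{\widetilde{Z}}\cdot\sheaf{L}$ are of the same type. One obtains a proper birational $\mu_2\colon X\to Y'$ with $X$ smooth, $\widetilde{Z}$ smooth, $\sheaf{I}_{Z'}\cdot\sheaf{O}_X=\sheaf{I}_{\widetilde{Z}}\cdot\sheaf{L}$ for an invertible sheaf $\sheaf{L}$, and $\widetilde{Z}\cup\exc(\mu_2)\cup\mu_2^{-1}(D)$ of simple normal crossing support.

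Finally, set $\pi=\mu_1\circ\mu_2$, so $\pi$ factors through $\mu_1$ by construction. It is proper and birational and, being a composition of morphisms that are isomorphisms over the generic points of the components of $Z$, it is one too; its strict transform of $Z$ coincides with that of $Z'$ under $\mu_2$, namely $\widetilde{Z}$. Since $Y'$ is smooth, hence locally factorial, the total transform $\sheaf{I}_Z\cdot\sheaf{O}_{Y'}$ is $\sheaf{I}_{Z'}$ times an invertible sheaf, so $\sheaf{I}_Z\cdot\sheaf{O}_X=(\sheaf{I}_Z\cdot\sheaf{O}_{Y'})\cdot\sheaf{O}_X=\sheaf{I}_{\widetilde{Z}}\cdot\sheaf{L}'$ with $\sheaf{L}'$ invertible, which is the factorizing condition. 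Moreover $\exc(\pi)\subseteq\exc(\mu_2)\cup\mu_2^{-1}(\exc(\mu_1))\subseteq\exc(\mu_2)\cup\mu_2^{-1}(D)$ by the first step, so $\widetilde{Z}\cup\exc(\pi)\cup\mu_2^{-1}(D)$ is a sub-configuration of a simple normal crossing divisor, hence has simple normal crossing support; in particular $\widetilde{Z}$ is transverse to $\exc(\pi)$, as the definition of a factorizing resolution demands.

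The step I expect to be the real obstacle is the second one: one must check that the cited factorizing-resolution constructions genuinely accept an \emph{arbitrary} prescribed simple normal crossing boundary $D$ — not merely the exceptional divisor that the algorithm itself generates — and that neither the ordinary resolution blow-ups nor the extra ``factorizing'' ones spoil transversality with $D$. A related subtlety, also treated in those references, is the precise comparison on the smooth $Y'$ between the total transform $\sheaf{I}_Z\cdot\sheaf{O}_{Y'}$ and the ideal $\sheaf{I}_{Z'}$ of the strict transform, especially if $Z$ has components of codimension one. Both points are exactly what Bravo and Eisenstein take care of, so the work here is to invoke their results in the right generality rather than to re-run any resolution process.
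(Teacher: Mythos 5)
The paper offers no proof of this proposition at all: it is quoted verbatim from Bravo (Theorem~1.2) and Eisenstein (Section~3), so the only comparison to be made is between your derivation and what those references actually have to do. Your outline is right in spirit, but it contains one genuine gap, and it sits exactly at the point you flag as a ``related subtlety'' and then dismiss: the assertion that, because $Y'$ is smooth and hence locally factorial, $\sheaf{I}_Z\cdot\sheaf{O}_{Y'}=\sheaf{I}_{Z'}\cdot(\text{invertible})$. Factoriality only lets you split off the pure codimension-one part of the total transform; the residual ideal of height $\geq 2$ can still differ from $\sheaf{I}_{Z'}$ by extra components supported inside $\exc(\mu_1)$, and this happens even when every component of $Z$ has codimension $\geq 2$. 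For example, take $Y=\affspace^3$, $Z=V(z,xy)$ the union of the two coordinate lines $V(x,z)$ and $V(y,z)$, and $\mu_1$ the blowup of the origin: in the chart $y=xy'$, $z=xz'$ one computes $\sheaf{I}_Z\cdot\sheaf{O}_{Y'}=(x)\cdot(z',xy')=(x)\cdot\bigl((y',z')\cap(x,z')\bigr)$, while $\sheaf{I}_{Z'}=(y',z')$; the factor $(z',xy')$ is not $\sheaf{I}_{Z'}$ times an invertible ideal because of the extra component $V(x,z')$ lying in the exceptional divisor. Consequently your final identity $\sheaf{I}_Z\cdot\sheaf{O}_X=\sheaf{I}_{\widetilde{Z}}\cdot\sheaf{L}'$ does not follow from the factorizing resolution of $Z'$ over $Y'$ together with factoriality.

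Repairing this is precisely the nontrivial content of the cited results in the singular-ambient setting: after (or while) running the factorizing resolution of $Z'$, one must perform additional blowups that principalize the discrepancy between $\sheaf{I}_Z\cdot\sheaf{O}_{Y'}$ and $\sheaf{I}_{Z'}$. This is possible with centers avoiding the generic points of $\widetilde{Z}$, since $\mu_1$ is an isomorphism over the generic points of the components of $Z$ and hence the two ideals agree off $\exc(\mu_1)$, but it must be carried out compatibly with the accumulated simple normal crossing boundary; it is not a formal consequence of smoothness of $Y'$. The remainder of your argument---absorbing $\exc(\mu_1)$ into an enlarged simple normal crossing divisor $D$, running the resolution algorithm with a prescribed boundary, and reading $\pi=\mu_2\circ\mu_1$ in the obvious intended order of composition---is sound and matches how the statement is assembled from the references.
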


Notice that if $\pi:X\rightarrow Y$ is such a factoring resolution of $Z$, then the blowup of $\widetilde{Z}$ is a log resolution of $Z$ and that the exceptional locus of this blowup consists of a collection of prime divisors in one-to-one correspondence with the components of $Z$ with codimension at least two. 

\begin{prop} \label{P:b-ideal}
  Let $Z_1,\ldots,Z_r$ be the components of $Z$ and suppose $e_i$ is the codimension of $Z_i$ for all $i$. Fix a factorizing resolution $\pi:X\rightarrow Y$ of $Z$ that is also a log resolution of the pair $(Y,\Delta)$ and let $\mathfrak{b}=\pi_*(\sheaf{L})$ where $\sheaf{I}_Z\cdot\sheaf{O}_X=\sheaf{I}_{\widetilde{Z}}\cdot\sheaf{L}$ as above. Then,
  \[
    \multideal\left((Y,\Delta),\sheaf{I}_Z^\lambda\right)=\multideal\left((Y,\Delta),\mathfrak{b}^\lambda\right)\cap\bigcap_{i=1}^r\sheaf{I}_{Z_i}^{(\lfloor\lambda+1-e_i\rfloor)}
  \]
\end{prop}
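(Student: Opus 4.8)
The plan is to pass from the given factorizing resolution $\pi$ to an honest log resolution of $\sheaf{I}_Z$ by blowing up the strict transform $\widetilde{Z}$, and then to split the resulting formula into the part pulled back from $X$ (which recovers $\multideal((Y,\Delta),\mathfrak{b}^\lambda)$) and the part carried by the newly created exceptional divisors (which produces the symbolic powers).

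First I would let $\rho\colon X'\to X$ be the composition of the blowups of the successive strict transforms of those components $\widetilde{Z}_i$ with $e_i\geq 2$, and set $\pi'=\pi\circ\rho$. Because $\pi$ is a \emph{factorizing} resolution, $\widetilde{Z}$ is smooth and crosses $\exc(\pi)\cup\pi^{-1}(\Delta)$ transversally, so $X'$ is smooth and $\pi'$ is a log resolution both of $(Y,\Delta)$ and of $\sheaf{I}_Z$; writing $\sheaf{L}=\sheaf{O}_X(-G)$ (with $G$ effective and $\pi$-exceptional) one also checks that $\pi$, hence $\pi'$, is a log resolution of $\mathfrak{b}$, with $\mathfrak{b}\cdot\sheaf{O}_X=\sheaf{O}_X(-G)$. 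Let $E_i'$ be the prime divisor over $\widetilde{Z}_i$, with the convention $E_i'=\widetilde{Z}_i$ (and $e_i-1=0$) when $e_i=1$. The discrepancy of a smooth blowup gives $K_{X'}=\rho^*K_X+\sum_i(e_i-1)E_i'$, and since $\sheaf{I}_{\widetilde{Z}}\cdot\sheaf{O}_{X'}=\sheaf{O}_{X'}(-\sum_iE_i')$ we get $\sheaf{I}_Z\cdot\sheaf{O}_{X'}=\sheaf{O}_{X'}(-F')$ with $F'=\rho^*G+\sum_iE_i'$.

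The heart of the argument is the divisor identity
\[
  K_{X'}-\lfloor\pi'^*(K_Y+\Delta)+\lambda F'\rfloor=\rho^*\bigl(K_X-\lfloor\pi^*(K_Y+\Delta)+\lambda G\rfloor\bigr)-\sum_i\lfloor\lambda+1-e_i\rfloor E_i'.
\]
To establish it, compute the difference of the two sides over $\QQ$ using $\pi'^*=\rho^*\pi^*$, the discrepancy formula, and $\rho^*G=\widetilde{G}$ with no $E_i'$ appearing (as $G$ is $\pi$-exceptional and $\widetilde{Z}$ is transverse to $\exc(\pi)$): the difference is supported on the $E_i'$, where its coefficient is $(e_i-1-\lambda)-(e_i-1)+\lambda=0$ before rounding, and the rounding contributes $\lceil e_i-1-\lambda\rceil E_i'=-\lfloor\lambda+1-e_i\rfloor E_i'$ (recall $e_i\in\ZZ$). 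For this one also needs $\ord_{E_i'}$ of both $K_X-\pi^*(K_Y+\Delta)$ and $G$ to vanish, i.e.\ that no component of $Z$ lies in $\mathrm{Supp}(\Delta)$, so that taking $\lfloor\,\cdot\,\rfloor$ commutes with $\rho^*$ away from the $E_i'$. I expect this step---verifying that the transversality built into a factorizing resolution makes the rounding split \emph{exactly} as above---to be the main obstacle.

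Finally I would push forward in two stages, $\pi'_*=\pi_*\rho_*$. Write $M=K_X-\lfloor\pi^*(K_Y+\Delta)+\lambda G\rfloor$ and $m_i=\lfloor\lambda+1-e_i\rfloor$. By the projection formula, $\rho_*\sheaf{O}_{X'}(\rho^*M-\sum_im_iE_i')=\sheaf{O}_X(M)\cdot\rho_*\sheaf{O}_{X'}(-\sum_im_iE_i')$, and the valuative description of the pushforward identifies $\rho_*\sheaf{O}_{X'}(-\sum_im_iE_i')$ with the sheaf of $f\in\sheaf{O}_X$ with $\ord_{E_i'}(f)\geq m_i$ for all $i$; since $\rho$ is, at the generic point of each $\widetilde{Z}_i$, the blowup of the smooth center $\widetilde{Z}_i$, this sheaf is $\bigcap_i\sheaf{I}_{\widetilde{Z}_i}^{(m_i)}$ (using $\sheaf{I}^{(m)}=\sheaf{O}$ for $m\leq 0$ and $\sheaf{I}_{\widetilde{Z}_i}^{(m)}=\sheaf{I}_{\widetilde{Z}_i}^{m}$ for the smooth $\widetilde{Z}_i$). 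Now apply $\pi_*$. On one hand $\pi_*\sheaf{O}_X(M)=\multideal((Y,\Delta),\mathfrak{b}^\lambda)$ by the definition of the multiplier ideal, since $\pi$ is a log resolution of $\mathfrak{b}$ and of $(Y,\Delta)$ with $\mathfrak{b}\cdot\sheaf{O}_X=\sheaf{O}_X(-G)$. On the other hand, because $\ord_{E_i'}(M)=0$ (no divisorial component of $K_X-\pi^*(K_Y+\Delta)$ or of $G$ contains any $\widetilde{Z}_i$), a rational function $f$ on $Y$ satisfies $\pi^*f\in\sheaf{O}_X(M)\cdot\bigcap_i\sheaf{I}_{\widetilde{Z}_i}^{(m_i)}$ if and only if $\pi^*f\in\sheaf{O}_X(M)$ and $\ord_{E_i'}(\pi^*f)\geq m_i$ for all $i$; and since $\pi$ is an isomorphism over the generic point of each $Z_i$, the valuation $\ord_{E_i'}$ restricts to $\ord_{Z_i}$ on $Y$, so the latter conditions say exactly $f\in\bigcap_i\sheaf{I}_{Z_i}^{(m_i)}$. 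Combining all of this yields
\[
  \multideal\bigl((Y,\Delta),\sheaf{I}_Z^\lambda\bigr)=\multideal\bigl((Y,\Delta),\mathfrak{b}^\lambda\bigr)\cap\bigcap_{i=1}^r\sheaf{I}_{Z_i}^{(\lfloor\lambda+1-e_i\rfloor)},
\]
which is the assertion.
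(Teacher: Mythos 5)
Your argument takes a genuinely different route from the paper. The paper proves the two inclusions separately by soft means: $\multideal(\sheaf{I}_Z^\lambda)\subseteq\multideal(\mathfrak{b}^\lambda)$ from $\sheaf{I}_Z\subseteq\mathfrak{b}$, the inclusion into each symbolic power by localizing at the generic point of $Z_i$ (where the primary-ness of $\sheaf{I}_{Z_i}^{(m)}$ lets one check the containment after a single smooth blowup with discrepancy $(e_i-1)E_i$), and the reverse inclusion from $\mathfrak{b}\cdot\sheaf{O}_X\subseteq\sheaf{L}$ together with a further log resolution. You instead build the log resolution $\pi'=\pi\circ\rho$ explicitly, prove a single divisor identity, and push forward; this is more computational but yields the equality in one stroke, and your transversality/rounding analysis is essentially right (it does require, as you note, that no $Z_i$ lies in $\mathrm{Supp}(\Delta)$, a hypothesis the paper's proof also uses tacitly).

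There is, however, one step you assert rather than prove, and it is not automatic: that $\mathfrak{b}\cdot\sheaf{O}_X=\sheaf{O}_X(-G)$, equivalently that $\pi$ is a log resolution of $\mathfrak{b}$ computing $\multideal((Y,\Delta),\mathfrak{b}^\lambda)$ as $\pi_*\sheaf{O}_X(M)$. For a pushforward $\mathfrak{b}=\pi_*\sheaf{O}_X(-G)$ one only gets $\mathfrak{b}\cdot\sheaf{O}_X\subseteq\sheaf{O}_X(-G)$ in general, and the excess (here necessarily cosupported on $\widetilde{Z}$, since $\sheaf{I}_{\widetilde{Z}}\cdot\sheaf{O}_X(-G)\subseteq\mathfrak{b}\cdot\sheaf{O}_X$) would ruin the identification of $\pi_*\sheaf{O}_X(M)$ with $\multideal(\mathfrak{b}^\lambda)$; this is exactly the point the paper sidesteps by never claiming $\pi$ resolves $\mathfrak{b}$ and instead passing to ``any log resolution of $\mathfrak{b}$, $Z$ and $(Y,\Delta)$ that factors through $\pi$.'' Your computation as it stands proves $\multideal(\sheaf{I}_Z^\lambda)=\pi_*\sheaf{O}_X(M)\cap\bigcap_i\sheaf{I}_{Z_i}^{(m_i)}$, which is not yet the assertion. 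The gap is fillable without establishing $\mathfrak{b}\cdot\sheaf{O}_X=\sheaf{O}_X(-G)$: on any log resolution $\mu:X''\rightarrow X$ of $\mathfrak{b}$ one has $\mathfrak{b}\cdot\sheaf{O}_{X''}\subseteq\sheaf{O}_{X''}(-\mu^*G)$, whence $\multideal(\mathfrak{b}^\lambda)\subseteq\pi_*\sheaf{O}_X(M)$ by the usual comparison lemma; combining this with the trivial $\multideal(\sheaf{I}_Z^\lambda)\subseteq\multideal(\mathfrak{b}^\lambda)$ squeezes $\multideal(\mathfrak{b}^\lambda)\cap\bigcap_i\sheaf{I}_{Z_i}^{(m_i)}$ between two copies of $\pi_*\sheaf{O}_X(M)\cap\bigcap_i\sheaf{I}_{Z_i}^{(m_i)}$ and closes the argument. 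You should add this (or an actual proof that $\sheaf{O}_X(-G)$ computes $\overline{\mathfrak{b}}$ on every higher model) before the proof is complete.
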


\begin{proof}
  Since $\sheaf{I}_Z\subseteq\mathfrak{b}$, it is clear that $\multideal\left((Y,\Delta),\sheaf{I}_Z^\lambda\right)\subseteq\multideal\left((Y,\Delta),\mathfrak{b}^\lambda\right)$. Let us now show $\multideal\left((Y,\Delta),\sheaf{I}_Z^\lambda\right)\subseteq\sheaf{I}_{Z_i}^{(\lfloor\lambda+1-e_i\rfloor)}$ for each $i$. Since $Z$ is generically smooth, the $\sheaf{I}_{Z_i}$ are prime and the $\sheaf{I}_{Z_i}^{(\lfloor\lambda+1-e_i\rfloor)}$ are primary. Since $\sheaf{I}_{Z_i}^{(\lfloor\lambda+1-e_i\rfloor)}$ is primary, it suffices to check the inclusion generically along the corresponding component (that is, after localizing at $\sheaf{I}_{Z_i}$). Because $Z$ is generically reduced, $\sheaf{I}_Z\cdot\sheaf{O}_{Z_i}=\sheaf{I}_{Z_i}\cdot\sheaf{O}_{Z_i}$. It is simple calculation based on the fact that one can resolve $\sheaf{O}_{Z_i}$ generically by blowing up $\sheaf{O}_{Z_i}$, and the relative canonical divisor for this blowup is $(e_i-1)E_i$, where $E_i$ is the resulting exceptional divisor.

  On the other hand, the extension of the contraction of an ideal is contained in the ideal. So, $\mathfrak{b}\cdot\sheaf{O}_X\subseteq\sheaf{L}$. Thus,
  \[
    \mathfrak{b}\cdot\sheaf{O}_X\cap\bigcap_{i=1}^r\sheaf{I}_{\widetilde{Z}_i}\subseteq\sheaf{L}\cap\bigcap_{i=1}^r\sheaf{I}_{\widetilde{Z}_i}=\sheaf{I}_Z\cdot\sheaf{O}_X.
  \]
  Therefore, we see
  \[
    \multideal\left((Y,\Delta),\mathfrak{b}^\lambda\right)\cap\bigcap_{i=1}^r\sheaf{I}_{Z_i}^{(\lfloor\lambda+1-e_i\rfloor)}\subseteq\multideal\left((Y,\Delta),\sheaf{I}_Z^\lambda\right)
  \]
  using any log resolution of $\mathfrak{b}$, $Z$ and $(Y,\Delta)$ that factors through $\pi$.
\end{proof}

\section{Exploiting the toric case} \label{S:toric_case}

This paragraph is, essentially, a direct quote of Blickle~\cite{MR2092724}. Let $(Y,\Delta)$ be a pair such that $Y$ is a normal (affine) toric variety (say $Y=\spec R$ for some normal semigroup ring $R\subseteq \Bbbk[x_1^{\pm1},\ldots,x_n^{\pm1}]$) and $\Delta$ is a torus invariant $\QQ$-divisor. Since $K_Y+\Delta$ is $\QQ$-Cartier and torus invariant, there is a monomial $\mathbf{x}^\mathbf{u}$ such that $\divisor\mathbf{x}^\mathbf{u}=r(K_Y+\Delta)$ for some integer $r$. Set $\mathbf{w}=\mathbf{u}/r$. Blickle's version of Howald's~\cite{MR1828466} formula is the following.

\begin{prop}(Theorem~1 of Blickle~\cite{MR2092724}) \label{P:Blickle}
  Let $\mathfrak{a}$ be a monomial ideal on $Y$. Then, if $\newt(\mathfrak{a})$ is the Newton Polyhedron of $\mathfrak{a}$,
  \[
    \multideal\left((Y,\Delta),\mathfrak{a}^\lambda\right)=\langle\mathbf{x}^\mathbf{v}\in R\mid\mathbf{v}+\mathbf{w}\in\text{ interior of }\lambda\newt(\mathfrak{a})\rangle
  \]
  for all $\lambda > 0$. 
\end{prop}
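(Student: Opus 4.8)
The plan is to reduce everything to an explicit toric log resolution. Write $Y=\spec R$ with $R=\Bbbk[\sigma^\vee\cap M]$ for a pointed, full-dimensional rational cone $\sigma$ in a lattice $N$ with dual lattice $M$, so that $\newt(\mathfrak{a})\subseteq M_\RR$ is a closed convex set with recession cone $\sigma^\vee$. Let $h\colon\sigma\to\RR$, $h(\mathbf{m})=\min\{\langle\mathbf{m},\mathbf{q}\rangle:\mathbf{q}\in\newt(\mathfrak{a})\}$, be its support function; as a minimum of linear functions it is positively homogeneous and concave, hence superadditive on $\sigma$. I would choose a fan $\Sigma$ supported on $\sigma$ that (i) refines the fan of $Y$, (ii) refines the normal fan of $\newt(\mathfrak{a})$, so that $h$ is \emph{linear} on every cone of $\Sigma$, and (iii) is smooth. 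Such a $\Sigma$ exists: take the common refinement of the fan of $Y$ and the normal fan of $\newt(\mathfrak{a})$, then apply toric resolution of singularities, observing that further subdivision preserves cone-wise linearity of $h$. Let $\pi\colon X\to Y$ be the resulting toric morphism, with torus-invariant prime divisors $D_\rho$ indexed by the rays $\rho\in\Sigma(1)$, primitive generators $\mathbf{e}_\rho\in\sigma$. Because $X$ is smooth and the divisors $\exc(\pi)$, $\pi^{-1}(\Delta)$, and the zero divisor of $\mathfrak{a}\cdot\sheaf{O}_X$ are all torus invariant, hence supported on the transversally meeting smooth divisors $D_\rho$, $\pi$ is at once a log resolution of $\mathfrak{a}$ and of $(Y,\Delta)$; so it computes the multiplier ideal.

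Next I would record the three torus-invariant divisors. First $K_X=-\sum_\rho D_\rho$. Second, since $\divisor\mathbf{x}^{\mathbf{u}}=r(K_Y+\Delta)$ and $\pi$ is the identity on the torus, $\pi^*(K_Y+\Delta)=\tfrac{1}{r}\divisor_X\mathbf{x}^{\mathbf{u}}=\sum_\rho\langle\mathbf{e}_\rho,\mathbf{w}\rangle D_\rho$. Third, because $h$ is linear on each cone $\tau\in\Sigma$, on the chart $U_\tau$ the ideal $\mathfrak{a}\cdot\sheaf{O}_X$ is generated by a single monomial, so $\mathfrak{a}\cdot\sheaf{O}_X=\sheaf{O}_X(-F)$ with $F=\sum_\rho a_\rho D_\rho$ and $a_\rho=\ord_{D_\rho}(\mathfrak{a})=h(\mathbf{e}_\rho)$. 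Hence
\[
  \multideal\left((Y,\Delta),\mathfrak{a}^\lambda\right)=\pi_*\sheaf{O}_X\!\left(\sum_{\rho}\bigl(-1-\lfloor\langle\mathbf{e}_\rho,\mathbf{w}\rangle+\lambda a_\rho\rfloor\bigr)D_\rho\right).
\]
By the standard description of pushforwards of torus-invariant sheaves on an affine toric variety, the right-hand side is the $R$-submodule of $\Bbbk(Y)$ spanned by the monomials $\mathbf{x}^{\mathbf{v}}$, $\mathbf{v}\in M$, with $\langle\mathbf{e}_\rho,\mathbf{v}\rangle\ge 1+\lfloor\langle\mathbf{e}_\rho,\mathbf{w}\rangle+\lambda a_\rho\rfloor$ for all $\rho\in\Sigma(1)$; as $\langle\mathbf{e}_\rho,\mathbf{v}\rangle\in\ZZ$, this is equivalent to the strict inequality $\langle\mathbf{e}_\rho,\mathbf{v}+\mathbf{w}\rangle>\lambda a_\rho=\lambda h(\mathbf{e}_\rho)$.

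It remains to identify $\{\mathbf{p}\in M_\RR:\langle\mathbf{e}_\rho,\mathbf{p}\rangle>\lambda h(\mathbf{e}_\rho)\text{ for all }\rho\in\Sigma(1)\}$ with the interior of $\lambda\newt(\mathfrak{a})$. Since $\newt(\mathfrak{a})$ is closed and convex with recession cone $\sigma^\vee$, one has $\lambda\newt(\mathfrak{a})=\{\mathbf{p}:\langle\mathbf{e},\mathbf{p}\rangle\ge\lambda h(\mathbf{e})\text{ for all }\mathbf{e}\in\sigma\}$, and $\mathbf{p}$ lies in the interior exactly when all these inequalities are strict: the finitely many facet inequalities are among them, and conversely if $\langle\mathbf{e}_0,\mathbf{p}\rangle=\lambda h(\mathbf{e}_0)$ for some $\mathbf{e}_0\in\sigma\setminus\{0\}$ then moving $\mathbf{p}$ in a direction pairing positively with $\mathbf{e}_0$ leaves $\lambda\newt(\mathfrak{a})$. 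So I must deduce the strict inequalities for all $\mathbf{e}\in\sigma\setminus\{0\}$ from those for the rays $\mathbf{e}_\rho$ alone. Given such an $\mathbf{e}$, pick a cone $\tau\in\Sigma$ with $\mathbf{e}\in\tau$ and write $\mathbf{e}=\sum_{\rho\in\tau(1)}c_\rho\mathbf{e}_\rho$ with $c_\rho\ge 0$, not all zero; then $\langle\mathbf{e},\mathbf{p}\rangle=\sum_\rho c_\rho\langle\mathbf{e}_\rho,\mathbf{p}\rangle>\sum_\rho c_\rho\lambda h(\mathbf{e}_\rho)=\lambda h(\mathbf{e})$, the last equality holding \emph{because $h$ is linear on $\tau$}. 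This is the one step where the choice (ii) of $\Sigma$ is indispensable — with mere superadditivity of $h$ the inequality would go the wrong way — and it is the heart of the proof; everything else is bookkeeping. Combining the three steps yields the stated formula (the degenerate case $\mathfrak{a}=0$ being trivial on both sides). The main obstacle is thus the careful setup of the resolution so that $h$ is cone-wise linear while keeping $X$ smooth and the whole configuration simple normal crossing.
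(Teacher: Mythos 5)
Your proof is correct. Note, however, that the paper itself offers no proof of this proposition: it is quoted verbatim as Theorem~1 of Blickle~\cite{MR2092724}, so there is nothing internal to compare against. What you have written is essentially the standard argument of Howald (for $\affspace^n$) as generalized by Blickle: pass to a smooth torus-equivariant log resolution whose fan refines both the cone of $Y$ and the normal fan of $\newt(\mathfrak{a})$, compute $K_X$, $\pi^*(K_Y+\Delta)$ and $F$ ray by ray, and translate the pushforward into lattice-point conditions. You correctly isolate the one step where something could go wrong --- converting the finitely many strict inequalities $\langle\mathbf{e}_\rho,\mathbf{v}+\mathbf{w}\rangle>\lambda h(\mathbf{e}_\rho)$ at the rays of $\Sigma$ into membership in the interior of $\lambda\newt(\mathfrak{a})$ --- and your use of the cone-wise linearity of the support function there is exactly right; mere superadditivity would indeed give the inequality in the wrong direction. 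The only cosmetic caveat is that your pushforward computation a priori produces a fractional ideal (monomials $\mathbf{x}^{\mathbf{v}}$ with $\mathbf{v}\in M$ not necessarily in $\sigma^\vee$), whereas the statement restricts to $\mathbf{x}^{\mathbf{v}}\in R$; this matches the usual convention under which the multiplier ideal of the pair is an honest ideal sheaf, and does not affect the substance of the argument.
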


This means that the multiplier ideals of a monomial ideal on a toric variety are contributed by (divisors supported on unions of) the Rees divisors of the ideal. (See Thompson~\cite{Thompson2003} for  a quick overview of the relationship between toric blowups and Newton polyhedra.) Other divisors that may appear on a log resolution do not contribute.

\begin{defn}
  We will say a sheaf $\sheaf{F}$ (respectively a Weil divisor $D$) on $X$ is \emph{locally monomial} if $X$ can be covered by open subschemes $U$ such that each $U$ is isomorphic to an open subscheme of a normal toric variety in such a way that $\sheaf{F}(U)$ is identified with a torus invariant sheaf (resp. a torus invariant divisor).
\end{defn}

\begin{prop} \label{P:toroidal}
  Let $(Y, \Delta)$ be a pair, consisting of a normal variety $Y$ over an algebraically closed field of characteristic zero and a $\QQ$-divisor $\Delta$ such that $K_Y+\Delta$ is $\QQ$-Cartier. If $\pi:X\rightarrow Y$ is a proper birational morphism such that $\pi^{-1}(\Delta)$ and $\sheaf{I}\cdot\sheaf{O}_X$ are locally monomial, then the multiplier ideals of $\sheaf{I}$ are contributed by the Rees divisors of $\sheaf{I}\cdot\sheaf{O}_X$.
\end{prop}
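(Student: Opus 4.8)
**The plan is to reduce Proposition~\ref{P:toroidal} to the toric case (Proposition~\ref{P:Blickle}) by working locally on $X$.** Since the statement "the multiplier ideals of $\sheaf{I}$ are contributed by the Rees divisors of $\sheaf{I}\cdot\sheaf{O}_X$" is really a statement about which divisorial valuations compute the jumping numbers, and this can be checked after pulling back along any further proper birational modification, the first step is to pass to a log resolution. Concretely, I would take a further modification $\rho: X' \to X$ such that $\rho$ is a log resolution of $\sheaf{I}\cdot\sheaf{O}_X$ and of the pair $(X, K_{X/Y} + \pi^*(K_Y+\Delta) + \text{(something making }\pi^{-1}\Delta\text{ SNC)})$, chosen inside each local toric chart to be a toric morphism. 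The point of the local-monomial hypothesis is exactly that such a $\rho$ exists chart-by-chart and glues: over a chart $U \cong$ (open in a toric variety), $\sheaf{I}\cdot\sheaf{O}_X|_U$ is a torus-invariant (monomial) ideal, so it has a toric log resolution, and the relevant divisors — the Rees divisors — are torus-invariant.

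**The core computation is then Howald--Blickle applied locally.** On each toric chart $U$, the morphism $\pi^{-1}\Delta|_U$ being locally monomial means the divisor $K_Y+\Delta$ pulls back to a torus-invariant $\QQ$-divisor, so there is a monomial $\mathbf{x}^{\mathbf{u}}$ with $\divisor \mathbf{x}^{\mathbf{u}} = r(K_U + (\text{the toric boundary correction}))$ in the sense needed to invoke Proposition~\ref{P:Blickle}; here the relative canonical $K_{X/Y}$ also gets absorbed since on a toric chart it is torus-invariant. Proposition~\ref{P:Blickle} then says that $\multideal((U, \Delta|_U), (\sheaf{I}\cdot\sheaf{O}_X)|_U^\lambda)$ is computed by the Newton polyhedron of the monomial ideal $\sheaf{I}\cdot\sheaf{O}_X|_U$, and the divisors realizing the relevant facets of $\lambda\,\newt$ are precisely the Rees divisors of $\sheaf{I}\cdot\sheaf{O}_X$ (via the toric blowup / Newton-polyhedron dictionary of Thompson~\cite{Thompson2003}). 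So on each chart no divisor outside the Rees divisors contributes. Then I would push forward: $\multideal((Y,\Delta),\sheaf{I}^\lambda) = \pi_* \multideal((X, \text{correction}), (\sheaf{I}\cdot\sheaf{O}_X)^\lambda)$ by the standard change-of-variables/birational transformation rule for multiplier ideals, and since the right-hand side is, chart-locally, cut out by the Rees divisors of $\sheaf{I}\cdot\sheaf{O}_X$, the claim follows.

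**I expect the main obstacle to be the bookkeeping of the canonical/boundary divisor on $X$** — making precise that "$\sheaf{I}\cdot\sheaf{O}_X$ and $\pi^{-1}\Delta$ are locally monomial" is exactly the hypothesis needed so that, in each toric chart, the pair $(X, \pi^*(K_Y+\Delta) - K_{X/Y})$ looks like a toric pair $(U, \Delta_U)$ with $\Delta_U$ torus-invariant, so that Proposition~\ref{P:Blickle} genuinely applies. Two subtleties have to be handled: first, the charts need not be honest toric varieties but only \emph{open subschemes} of toric varieties, so I must check that Howald--Blickle's formula (which is local on the toric variety anyway, since multiplier ideals are local and the formula is monomial-by-monomial) restricts correctly to opens; second, the discrepancy divisor $K_{X/Y}$ need not be effective, but Proposition~\ref{P:Blickle} is stated for a genuine pair $(Y,\Delta)$ with $K_Y+\Delta$ only $\QQ$-Cartier, not effective, so this causes no trouble as long as torus-invariance is maintained. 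Once these identifications are set up, the conclusion is immediate from Proposition~\ref{P:Blickle} and the toric-blowup/Newton-polyhedron correspondence; the argument is essentially "Proposition~\ref{P:Blickle} holds locally on $X$, and gluing preserves the list of contributing divisors."
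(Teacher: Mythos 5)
Your proposal is correct and follows essentially the same route as the paper: localize on $X$ to reduce to a monomial ideal on an affine toric chart, pass to a toric log resolution compatible with $\pi^{-1}(\Delta)$, and conclude via the Howald--Blickle/Newton-polyhedron picture that only the facet-defining (i.e.\ Rees) divisors can contribute. The paper's proof is just a terser version of this, phrasing the last step as the fact that a polyhedron is already cut out by its facet-defining half-spaces.
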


\begin{proof}
  Since the question is local on $X$, it suffices to consider the case where $X$ is a normal affine toric variety and $\mathfrak{a}=\sheaf{I}\cdot\sheaf{O}_X$ is a monomial ideal. Let $\mu:X'\rightarrow X$ be a toric log resolution of the ideal $\mathfrak{a}$ that is also a log resolution of the pair $(X, \pi^{-1}(\Delta))$. Evidently, $\mu$ factors through the blowup of $\mathfrak{a}$ and, as in the toric case, orders of vanishing on any exceptional divisor of $\mu\circ\pi$ are determined by those on the blowup of $\mathfrak{a}$. This is just the fact that when one represents a polyhedron as an intersection of half-spaces it suffices to consider only the  facet-defining half-spaces.
\end{proof}

Consider the case of any principal binomial ideal $I=\langle\mathbf{x}^{\mathbf{v}_1}-\mathbf{x}^{\mathbf{v}_2}\rangle\subseteq\Bbbk[x_1,\dots,x_n]$. 

\begin{prop} \label{P:principal}
  If $I=\langle\mathbf{x}^{\mathbf{v}_1}-\mathbf{x}^{\mathbf{v}_2}\rangle$ and $\termideal=\langle\mathbf{x}^{\mathbf{v}_1},\mathbf{x}^{\mathbf{v}_2}\rangle$ is the term ideal of $I$, then
  \[
    \multideal(\affspace^n,I^\lambda)=\multideal(\affspace^n,\termideal^\lambda)\cap I^{(\lfloor\lambda\rfloor)}.
  \]
\end{prop}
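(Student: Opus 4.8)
The plan is to apply \vref{P:b-ideal} with $Y=\affspace^n$, $\Delta=0$, and $Z=V(I)$, which is generically smooth: away from the locus where $\mathbf{x}^{\mathbf{v}_1}$ and $\mathbf{x}^{\mathbf{v}_2}$ both vanish, the binomial $\mathbf{x}^{\mathbf{v}_1}-\mathbf{x}^{\mathbf{v}_2}$ has a nonvanishing partial derivative, so the hypersurface $Z$ is smooth there and $I$ is the ideal of a reduced (indeed smooth) hypersurface at its generic points; hence $r=1$ component, $e_1=1$, and $\lfloor\lambda+1-e_1\rfloor=\lfloor\lambda\rfloor$. The symbolic power $\sheaf{I}_{Z_1}^{(\lfloor\lambda\rfloor)}$ is then just the ordinary power $I^{(\lfloor\lambda\rfloor)}$ appearing in the statement (the $I$-adic symbolic power of the prime defining $Z$), so \vref{P:b-ideal} gives
\[
    \multideal(\affspace^n,I^\lambda)=\multideal(\affspace^n,\mathfrak{b}^\lambda)\cap I^{(\lfloor\lambda\rfloor)},
\]
where $\mathfrak{b}=\pi_*\sheaf{L}$ for a factorizing resolution $\pi:X\to\affspace^n$ of $Z$ with $I\cdot\sheaf{O}_X=\sheaf{I}_{\widetilde Z}\cdot\sheaf{L}$.

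It therefore remains to identify $\multideal(\affspace^n,\mathfrak{b}^\lambda)$ with $\multideal(\affspace^n,\termideal^\lambda)$. The key point is that $\mathfrak{b}$ and $\termideal$ should have the same integral closure, equivalently the same Newton polyhedron, so that they have identical multiplier ideals. First I would choose the factorizing resolution to be \emph{toric}: one can take $\pi$ to be a toric log resolution of the monomial ideal $\termideal$ dominating the blowup of $\termideal$, then further blow up the strict transform of $Z$ so that $I\cdot\sheaf{O}_X=\sheaf{I}_{\widetilde Z}\cdot\sheaf{L}$ with $\sheaf{L}$ the invertible sheaf cut out by a torus-invariant (monomial) divisor. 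Writing $\termideal\cdot\sheaf{O}_X=\sheaf{O}_X(-G)$ for a simple-normal-crossings torus-invariant divisor $G$, on this resolution the binomial $\mathbf{x}^{\mathbf{v}_1}-\mathbf{x}^{\mathbf{v}_2}$ pulls back to a local generator of $\sheaf{O}_X(-G)$ times a unit away from $\widetilde Z$, so $\sheaf{L}=\sheaf{O}_X(-G)$ and hence $\mathfrak{b}=\pi_*\sheaf{O}_X(-G)=\overline{\termideal}$, the integral closure of $\termideal$ (its sections are exactly the monomials in the Newton polyhedron of $\termideal$). Since multiplier ideals depend only on the integral closure, $\multideal(\affspace^n,\mathfrak{b}^\lambda)=\multideal(\affspace^n,\termideal^\lambda)$, and combining with the displayed equation finishes the proof.

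The step I expect to be the main obstacle is the verification that on the chosen toric resolution one genuinely has $\sheaf{L}=\sheaf{O}_X(-G)$, i.e.\ that the binomial generates exactly the same divisorial ideal as its term ideal after pulling back. This requires checking, chart by chart on the toric resolution of $\termideal$, that in each monomial chart one of the two monomials $\mathbf{x}^{\mathbf{v}_1},\mathbf{x}^{\mathbf{v}_2}$ divides the other (this is precisely what a toric log resolution of $\termideal=\langle\mathbf{x}^{\mathbf{v}_1},\mathbf{x}^{\mathbf{v}_2}\rangle$ achieves, since the Newton polyhedron has two vertices and the normal fan refinement separates them), so that $\mathbf{x}^{\mathbf{v}_1}-\mathbf{x}^{\mathbf{v}_2}=\mathbf{x}^{\mathbf{v}_{\min}}(1-\mathbf{x}^{\mathbf{v}_{\max}-\mathbf{v}_{\min}})$, and that the second factor vanishes to order exactly one along the strict transform $\widetilde Z$ and is a unit elsewhere — which holds because $\widetilde Z$ is a reduced smooth divisor transverse to the toric boundary. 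Once this normal-form statement is in hand, the identification of $\mathfrak{b}$ with $\overline{\termideal}$ and the invariance of multiplier ideals under integral closure (both standard) complete the argument. As a sanity check one can also invoke \vref{P:toroidal}: on this resolution $I\cdot\sheaf{O}_X$ is locally monomial away from $\widetilde Z$, which is compatible with the conclusion that only the Rees divisors of $\termideal$ — together with the single divisor over $Z$ accounting for the symbolic-power factor — contribute.
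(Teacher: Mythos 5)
Your overall strategy coincides with the paper's: realize a toric log resolution of the term ideal $\termideal$ as a factorizing resolution of $Z=V(I)$, identify $\mathfrak{b}=\pi_*\sheaf{L}$ with the integral closure of $\termideal$ so that $\multideal(\affspace^n,\mathfrak{b}^\lambda)=\multideal(\affspace^n,\termideal^\lambda)$, and then invoke \vref{P:b-ideal}. But there are two real problems. First, your claim that $Z$ has $r=1$ component is false in general: writing $\mathbf{v}_1-\mathbf{v}_2=r\mathbf{v}$ with $\mathbf{v}$ primitive, the binomial factors over the algebraically closed base field into $r$ irreducible pieces, one for each $r$th root of unity $\zeta$, with zero loci the closures of $V(\mathbf{x}^{\mathbf{v}}-\zeta)$ (think of $x^2-y^2$); generic smoothness does not give irreducibility. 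Since every component is a hypersurface, the exponents $\lfloor\lambda+1-e_i\rfloor=\lfloor\lambda\rfloor$ all agree and the intersection of the symbolic powers is still $I^{(\lfloor\lambda\rfloor)}$, so the displayed formula survives; but the error propagates into your verification that the resolution is factorizing, where ``$\widetilde Z$ is a reduced smooth divisor transverse to the boundary'' must mean that the $r$ strict transforms are each smooth, pairwise disjoint, and transverse to the toric boundary. Distinct branches can meet on the boundary downstairs (the two lines of $x^2-y^2=0$ meet at the origin), so this is not automatic.

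Second, the step you yourself flag as ``the main obstacle'' is essentially the entire content of the paper's proof, and you leave it unproved. The paper handles it by an explicit coordinate computation: since $\mathbf{v}$ is primitive, $\ZZ\mathbf{v}$ splits off $\ZZ^n$, so $\NN^n+\ZZ\mathbf{v}\cong S\times\ZZ$, and the charts $\spec\Bbbk[x_1,\ldots,x_n,\mathbf{x}^{\pm\mathbf{v}}]\setminus V\bigl((\mathbf{x}^{r\mathbf{v}}-1)/(\mathbf{x}^{\mathbf{v}}-\zeta)\bigr)\cong\spec\Bbbk[S][t]_{t+1}$ with $t=\mathbf{x}^{\mathbf{v}}-1$, one for each root of unity, exhibit $I$ locally as a monomial ideal in a coordinate system that includes the defining equation of the single branch surviving in that chart; the remaining charts $U_i\setminus\overline{V(\mathbf{x}^{r\mathbf{v}}-1)}$ are where $I\cdot\sheaf{O}_X=\termideal\cdot\sheaf{O}_X$ is already monomial and \vref{P:toroidal} applies. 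Your appeal to ``the normal fan separates the two vertices'' gives the local normal form $\mathbf{x}^{\mathbf{v}_{\min}}(1-\mathbf{x}^{\mathbf{v}_{\max}-\mathbf{v}_{\min}})$, but does not by itself show that the second factor is, up to a unit, a product of $r$ members of an SNC system of parameters --- which is exactly what you need both for $\sheaf{L}=\sheaf{O}_X(-G)$ and for the smoothness and transversality of $\widetilde Z$. (Both you and the paper also implicitly set aside the degenerate case where $\mathbf{x}^{\mathbf{v}_1}$ and $\mathbf{x}^{\mathbf{v}_2}$ share a common factor, in which $Z$ acquires coordinate-hyperplane components.) With the splitting argument supplied and the components counted correctly, your outline becomes the paper's proof.
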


\begin{proof}
  Let $\mathbf{v}$ be a primitive lattice vector such that $r\mathbf{v}=\mathbf{v}_1-\mathbf{v}_2$ for some positive integer $r$. Cover the normalized blowup of $\termideal$ with affine open toric varieties $U_1,\ldots,U_s$. Now, consider the covering consisting of the open sets of the form $\spec\Bbbk[x_1,\dots,x_n,\mathbf{x}^{\pm\mathbf{v}}] \setminus V(f)$ where $f=\frac{\mathbf{x}^{r\mathbf{v}}-1}{\mathbf{x}^\mathbf{v}-\zeta}$ for an $r$th root of unity $\zeta$ and the open subsets obtained by removing the closure of $V(\mathbf{x}^{r\mathbf{v}}-1)\subseteq\spec\Bbbk[x_1^{\pm1},\dots,x_n^{\pm1}]$ from each $U_i$. 

  Note that since $\mathbf{v}$ is primitive, $\ZZ\mathbf{v}$ splits from $\ZZ^n$. So, $\NN^n+\ZZ\mathbf{v}\cong S\times\ZZ$ where $S$ is the image of $\NN^n$ in $\ZZ^n/\ZZ\mathbf{v}$. Each open set of the form $\spec\Bbbk[x_1,\dots,x_n,\mathbf{x}^{\pm\mathbf{v}}] \setminus V(f)$ is of the form $\spec\Bbbk[S][t]_{t+1}$ where $t=\mathbf{x}^{\mathbf{v}}-1$. 

  And, on each open set of the form $U_i\setminus V(\mathbf{x}^{r\mathbf{v}}-1)$, $I\cdot\sheaf{O}_X=\termideal\cdot\sheaf{O}_X$ is monomial already. Thus, \vref{P:toroidal} applies. Moreover, it is clear that the components of the closure of $V(\mathbf{x}^{r\mathbf{v}}-1)\subseteq\spec\Bbbk[x_1^{\pm1},\dots,x_n^{\pm1}]$ are smooth and meet the boundary transversely. 

  So, a toric desingularization of the blowup of $\termideal\cdot\sheaf{O}_X$ is a factorizing resolution of $I$. Now, apply \vref{P:b-ideal}.
\end{proof}

This result is not new. Principal binomial ideals are nondegenerate. For an alternate proof, see Howald~\cite{Howald2003}.

\section{Application to the monomial space curve case} \label{S:space_curves}

Using the previous ideas, one can refine the result of Thompson~\cite{MR3168880}. The case where the the monomial space curve is contained in a smooth toric surface follows from the principal toric case by using adjunction and inversion of adjunction.

Let $\Bbbk$ be a field of characteristic zero, let $C=\{(t^{n_1},t^{n_2},t^{n_3})\}\subset\affspace_\Bbbk^3$ be a monomial space curve not contained in a smooth toric surface. Assume $\mathbf{n}=\begin{bmatrix}n_1 & n_2 & n_3\end{bmatrix}\in\ZZ_{>0}^3$ is a primitive vector, let $\ord_\mathbf{n}$ be the monomial valuation given by the standard pairing, $\mathbf{x}^\mathbf{m}\mapsto\langle\mathbf{n},\mathbf{m}\rangle$, and let $I\subset\Bbbk[x_1,x_2,x_3]$ be the ideal of $C$. We may assume there exist irreducible binomials $f_1=\mathbf{x}^{\mathbf{m}_1^+}-\mathbf{x}^{\mathbf{m}_1^-}$, $f_2=\mathbf{x}^{\mathbf{m}_2^+}-\mathbf{x}^{\mathbf{m}_2^-}$, and  $f_3=\mathbf{x}^{\mathbf{m}_3^+}-\mathbf{x}^{\mathbf{m}_3^-}$ such that $\{f_1,f_2,f_3\}$ or $\{f_1,f_2\}$ is a minimal generating set for $I$. Let $\termideal=\left(\mathbf{x}^{\mathbf{m}_1^+},\mathbf{x}^{\mathbf{m}_1^-},\mathbf{x}^{\mathbf{m}_2^+},\mathbf{x}^{\mathbf{m}_2^-},\mathbf{x}^{\mathbf{m}_3^+},\mathbf{x}^{\mathbf{m}_3^-}\right)$ be the term ideal of $I$. Let $d_i=\ord_\mathbf{n}(f_i)$ for $i=1,2,3$. Order the generators so that $d_1< d_2< d_3$ and order the $n_i$ so that $n_i|d_i$ for $i=1,2$ (and $n_3|d_3$ when $f_3$ is a minimal generator). See Section~3 of Shibuta and Takagi~\cite{MR2533766} for a more detailed setup. Let $\mathbf{m}_1=\mathbf{m}_1^+-\mathbf{m}_1^-$ and let $\mathbf{q}=\begin{bmatrix}q_1 & q_2 & 0\end{bmatrix}\in\NN^3$ be the primitive vector such that $\langle\mathbf{q},\mathbf{m}_1\rangle=0$. And, let $e_i=\ord_\mathbf{q}(f_i)$ for $i=1,2,3$. 

\begin{prop} \label{P:curvefix}
  Let $\mathfrak{a}_1=\left(\mathbf{x}^{\mathbf{m}_1^+},\mathbf{x}^{\mathbf{m}_1^-}\right)$, let $\mathfrak{a}_2=(x_1^{n_2n_3},x_2^{n_1n_3},x_3^{n_1n_2})$, and let the toric variety $X=X_\Sigma$ be the normalized blowup of $\mathfrak{a}_1\mathfrak{a}_2$. Then the ideal sheaf $I\cdot\sheaf{O}_X$ is locally monomial.
\end{prop}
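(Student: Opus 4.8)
The plan is to imitate the proof of \vref{P:principal}: produce an open cover of $X$ by sets on which $I\cdot\sheaf{O}_X$ is visibly a torus invariant ideal for a (possibly non-standard) toric structure, and then invoke \vref{P:b-ideal} as before. The guiding principle is that $\mathfrak{a}_1$ is there to ``straighten'' the fundamental binomial $f_1$, while $\mathfrak{a}_2$ is there to force $\ord_\mathbf{n}$ to be a ray of $\Sigma$; once these two things are arranged, the generators $f_2$ and $f_3$ come along for free because $d_1<d_2<d_3$. The structural facts I would record first are: $\mathbf{m}_1=\mathbf{m}_1^+-\mathbf{m}_1^-$ is primitive (since $f_1$ is irreducible); writing $\mathbf{m}_i=\mathbf{m}_i^+-\mathbf{m}_i^-$, the fact that each $f_i$ lies in the binomial prime $I$ forces $\mathbf{m}_i\in L:=\mathbf{n}^\perp\cap\ZZ^3$, a rank-two saturated lattice; by construction $\mathbf{q}\perp\mathbf{m}_1$, so $\mathbf{m}_1$ spans $L\cap\mathbf{q}^\perp$; on the torus $T=(\Bbbk^\times)^3$ one has $f_i=\mathbf{x}^{\mathbf{m}_i^-}(\mathbf{x}^{\mathbf{m}_i}-1)$ with $\mathbf{x}^{\mathbf{m}_i^-}$ a unit, and $I\cdot\Bbbk[T]$ is the ideal of the subtorus $C\cap T=\{\mathbf{x}^{\mathbf{m}}=1:\mathbf{m}\in L\}$, whence $\mathbf{m}_1,\mathbf{m}_2,\mathbf{m}_3$ generate $L$. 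Finally, because $\mathfrak{a}_2$ has a facet of its Newton polyhedron with inner normal $\mathbf{n}$ and $\mathfrak{a}_1$ has the bounded facet $[\mathbf{m}_1^+,\mathbf{m}_1^-]$ with inner normal $\mathbf{q}$, both $\mathbf{n}$ and $\mathbf{q}$ are rays of $\Sigma$; and since $C$ is unibranch, its strict transform $\widetilde C$ meets the exceptional locus of $X\to\affspace^3$ in the single point $p_0:=x_{\RR_{\ge0}\mathbf{n}}\in D_\mathbf{n}$.

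Next I would cover $X$ by its affine toric charts $U_\sigma=\spec\Bbbk[\sigma^\vee\cap\ZZ^3]$. Invertibility of $\mathfrak{a}_1\cdot\sheaf{O}_X$ means $\sigma$ does not straddle $\mathbf{m}_1^\perp$, so after possibly swapping $\mathbf{m}_1^+\leftrightarrow\mathbf{m}_1^-$ the monomial $\mathbf{x}^{\mathbf{m}_1}$ is regular on $U_\sigma$ and $f_1=\mathbf{x}^{\mathbf{m}_1^-}(\mathbf{x}^{\mathbf{m}_1}-1)$. I then distinguish two kinds of local behaviour, just as in the principal case. First, near a point $p$ at which $\mathbf{x}^{\mathbf{m}_1}$ either vanishes or takes a value $\neq1$, the factor $\mathbf{x}^{\mathbf{m}_1}-1$ is a unit, $f_1$ generates the monomial ideal $(\mathbf{x}^{\mathbf{m}_1^-})$, and $p$ does not lie on $\widetilde C$; here one checks that $f_2$ and $f_3$ also generate monomial ideals near $p$ (on $T$ this is just that $I\cdot\Bbbk[T]$ localizes to the unit ideal off $C\cap T$; on a boundary stratum $O_\tau$ with $\tau\not\subseteq\mathbf{m}_1^\perp$ one uses that $\mathbf{m}_i\notin\ZZ\mathbf{m}_1$ for $i=2,3$ — forced since $\mathbf{m}_i\in L$ but $L\cap\mathbf{q}^\perp=\ZZ\mathbf{m}_1$ — so that one of $\mathbf{x}^{\pm\mathbf{m}_i}$ vanishes along the boundary divisor through $p$ and $f_i$ becomes a unit times a monomial). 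Second, near a point $p$ at which $\mathbf{x}^{\mathbf{m}_1}$ is a unit of value $1$, the cone $\tau$ of the stratum through $p$ must lie in $\mathbf{m}_1^\perp$, so — $\mathbf{m}_1$ being primitive — splitting $\ZZ\mathbf{m}_1$ off $\ZZ^3$ realizes a neighbourhood of $p$ as an open subscheme of $\affspace^1_t\times\overline T\times V_\tau$ (with $t=\mathbf{x}^{\mathbf{m}_1}-1$, $\overline T$ a torus and $V_\tau$ an affine toric variety), a normal toric variety on which $V(\mathbf{x}^{\mathbf{m}_1}-1)=\{t=0\}$ is torus invariant and $f_1=(\text{monomial})\cdot t$. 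The decisive case is $p=p_0$: because $\mathbf{n}$ is a ray, this chart is the \emph{smooth} variety $T_L\times\affspace^1$ with coordinate $z$ on $\affspace^1$ vanishing on $D_\mathbf{n}$, and there $f_i=(\text{unit})\cdot z^{d_i}(\mathbf{x}^{\mathbf{m}_i}-1)$ for $i=1,2,3$, where $\mathbf{x}^{\mathbf{m}_i}-1$ is a character of $T_L$ minus $1$ and vanishes on $\widetilde C$. Using primitivity of $\mathbf{m}_1$ one trivializes $f_1$ to $z^{d_1}s$ for a coordinate $s$; using $d_2>d_1$ the ``$s$-part'' of $f_2$ is absorbed into $(z^{d_1}s)$ and, since $\mathbf{m}_1,\mathbf{m}_2$ (with $\mathbf{m}_3$) span $L$, the remainder trivializes to $z^{d_2}w$ for a second coordinate $w$; and since $d_3>d_1,d_2$ the generator $f_3$ is redundant. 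So near $p_0$ one gets $I\cdot\sheaf{O}_X=(z^{d_1}s,z^{d_2}w)$, a monomial ideal, and a parallel but easier computation handles the remaining points of $\widetilde C$. As every point of $X$ lies in a chart of one of these two types, $I\cdot\sheaf{O}_X$ is locally monomial.

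I expect the main obstacle to be exactly this control of $f_2$ and $f_3$. In the principal binomial case there is a single binomial and $\mathbf{m}_1$ governs everything; here one must genuinely use that the three binomials are the minimal relations of a monomial space curve — that $\mathbf{m}_1,\mathbf{m}_2,\mathbf{m}_3$ generate $L$, that $\mathbf{m}_1$ is primitive, and that $d_1<d_2<d_3$ — to see that after blowing up $\mathfrak{a}_1\mathfrak{a}_2$ no relation survives as a genuine binomial transverse to the local torus. Keeping the exponent bookkeeping straight on the several types of charts — verifying that $f_2,f_3$ really do become monomial on the charts away from $\widetilde C$, and that near $D_\mathbf{n}$ the reductions modulo lower powers of $z$ leave precisely a monomial ideal — is the crux, and it is exactly here that the choice of the two auxiliary ideals $\mathfrak{a}_1$ and $\mathfrak{a}_2$ (equivalently, of the two auxiliary valuations $\ord_\mathbf{q}$ and $\ord_\mathbf{n}$) is used; in particular, dropping $\mathfrak{a}_2$ would leave $p_0$ on a $2$-dimensional cone inside $\mathbf{m}_1^\perp$ where $f_2,f_3$ remain honest binomials, so the statement would fail.
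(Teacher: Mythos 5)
Your overall strategy coincides with the paper's: cover $X$ by toric charts, do the explicit computation on the chart $U_\rho$ of the ray $\RR_{\geq0}\mathbf{n}$, and handle the locus $\{\mathbf{x}^{\mathbf{m}_1}=1\}$ by splitting $\ZZ\mathbf{m}_1$ off $\ZZ^3$ exactly as in \vref{P:principal}; your local answer $(z^{d_1}s,z^{d_2}w)$ on $U_\rho$ is the paper's displayed formula. The gap is in where you locate the residual difficulty. Because $d_1<d_2$, the ideal $I\cdot\sheaf{O}_X$ acquires an embedded component supported on the curve $\overline{V(\mathbf{x}^{\mathbf{m}_1}-1,\mathbf{x}^{\mathbf{m}_\rho})}\subseteq\widetilde{V(f_1)}\cap D_\rho$, and this curve leaves $U_\rho$ at two closed points lying on two-dimensional cones $\sigma\supsetneq\rho$ of $\Sigma$. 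Those two points are \emph{not} on $\widetilde{C}$ (which meets $D_\rho$ only at your $p_0$, an interior point of $U_\rho$ already disposed of by the chart computation), they satisfy $\mathbf{x}^{\mathbf{m}_1}=1$ so your ``first kind'' analysis does not apply, and your ``second kind'' analysis sets up the product structure $\Bbbk[\mathsf{S}][t]_{t+1}$ there but never carries out the ideal computation: you only compute at $p_0$ and wave at ``the remaining points of $\widetilde{C}$.'' These two points are precisely where the paper's proof says the checking must be done, and precisely where the second auxiliary valuation $\nu_2$ of \vref{P:main} lives, so they cannot be absorbed into an ``easier parallel computation.'' Relatedly, your claim that $f_3$ is redundant because $d_3>d_1,d_2$ is valid on $U_\rho$ (where $\mathbf{x}^{\mathbf{m}_1},\mathbf{x}^{\mathbf{m}_2}$ are units) but fails at these boundary points in the non--complete-intersection case: there $\mathbf{t}^{\mathbf{u}_3}$ can be a vertex of the relevant Newton polyhedron, as the appendix's third case shows.

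A secondary soft spot: in your ``first kind'' case you assert that on a stratum $O_\tau$ with $\tau\not\subseteq\mathbf{m}_1^\perp$ one of $\mathbf{x}^{\pm\mathbf{m}_i}$ vanishes along a boundary divisor through $p$, so $f_i$ becomes a unit times a monomial for $i=2,3$. The fan $\Sigma$ is only guaranteed not to straddle $\mathbf{m}_1^\perp$ (that is what blowing up $\mathfrak{a}_1$ buys); a cone of $\Sigma$ may well straddle $\mathbf{m}_2^\perp$ or $\mathbf{m}_3^\perp$, in which case neither $\mathbf{x}^{\mathbf{m}_i}$ nor $\mathbf{x}^{-\mathbf{m}_i}$ is regular on $U_\tau$ and $f_i$ is not a unit times a monomial by itself. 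What saves the statement there is that $I\cdot\sheaf{O}_{X,p}$ already contains the invertible monomial coming from $f_1$ and in fact agrees with $\termideal\cdot\sheaf{O}_{X,p}$ away from $\widetilde{C}$ and the embedded component; you should argue via the whole ideal rather than generator by generator.
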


\begin{proof}
  The blowup of $\mathfrak{a}_1$ is the partial desingularization of the toric surface $V(f_1)$ identified in Gonz{\'a}lez P{\'e}rez and Teissier~\cite{MR1892938}, and the normalized blowup of $\mathfrak{a}_2$ is  the partial desingularization of $C$. The fan $\Sigma_1$ of the blowup of $\mathfrak{a}_1$ has two maximal cones $\left\{\mathbf{v}\in\RR_{\geq0}^3\mid\langle\mathbf{v},\mathbf{m}_1\rangle\leq0\right\}$ and $\left\{\mathbf{v}\in\RR_{\geq0}^3\mid\langle\mathbf{v},\mathbf{m}_1\rangle\geq0\right\}$. The normalized blowup of $\mathfrak{a}_2$ is stellar subdivision along the ray $\rho=\RR_{\geq0}\mathbf{n}$. Note that $\mathbf{n}$ is in the intersection of the two maximal cones of $\Sigma_1$. So, the two operations on fans, stellar subdivision along $\mathbf{n}$ and cutting with the plane $\left\{v\in\RR_{\geq0}^3\mid\langle\mathbf{v},\mathbf{m}_1\rangle=0\right\}$ commute. And, $\Sigma$ is the stellar subdivision along $\rho$ of $\Sigma_1$. (Any toric desingularization of $X$ provides a common embedded desingularization of $C$ and the surface $V(f_1)$.)

  First, consider the affine open $U_\rho$ of $X$ and fix an element of the affine semigroup $\mathbf{m}_\rho\in\mathsf{S}_\rho$ such that $\langle\mathbf{m}_\rho,\mathbf{n}\rangle=1$. I claim, $\{\mathbf{m}_1,\mathbf{m}_2\}$ is a basis of the kernel of the matrix $\begin{bmatrix}n_1 & n_2 & n_3\end{bmatrix}$, $\mathsf{S}_\sigma=\NN^3+\ZZ\mathbf{m}_1+\ZZ\mathbf{m}_2$, $f_i=(\mathbf{x}^{\mathbf{m}_\rho})^{d_i}(\mathbf{x}^{\mathbf{m}_i}-1)$ for each $i=1,2,3$, and $f_3\in(f_1,f_2)\Bbbk[\mathsf{S}_\sigma]$. So,
  \begin{multline*}
    I\cdot\sheaf{O}_{U_\rho}=\left((\mathbf{x}^{\mathbf{m}_\rho})^{d_1}(\mathbf{x}^{\mathbf{m}_1}-1),(\mathbf{x}^{\mathbf{m}_\rho})^{d_2}(\mathbf{x}^{\mathbf{m}_2}-1)\right) \\
     =\left(\mathbf{x}^{d_1\mathbf{m}_\rho}\right)\cap\left(\mathbf{x}^{\mathbf{m}_1}-1,\mathbf{x}^{d_2\mathbf{m}_\rho}\right)\cap(\mathbf{x}^{\mathbf{m}_1}-1,\mathbf{x}^{\mathbf{m}_2}-1)
  \end{multline*}
  is monomial in $\mathbf{x}^{\mathbf{m}_\rho}$, $\mathbf{x}^{\mathbf{m}_1}-1$, and $\mathbf{x}^{\mathbf{m}_2}-1$. Since $d_1<d_2$, there is an embedded component supported on the intersection of the strict transform of the surface $\widetilde{V(f_1)}$ and the divisor $D_\rho$. Away from this embedded component, $I\cdot\sheaf{O}_X=\termideal\cdot\sheaf{O}_X$. Thus, it suffices to check the closed points where the curve $\overline{V(\mathbf{x}^{\mathbf{m}_1}-1,\mathbf{x}^{\mathbf{m}_\rho})}$ meets $X\setminus U_\rho$. 

  Let $p\in X$ be one of these two points, and let $\sigma$ be the smallest cone of $\Sigma$ such that $p\in U_\sigma$. Evidently, $\rho\subsetneq\sigma$ since $p\notin U_\rho$. After possibly replacing $\mathbf{m}_1$ with $-\mathbf{m}_1$, we may assume $\mathbf{x}^{\mathbf{m}_1}-1\in\mathfrak{m}_{X,p}$. Since $\mathbf{x}^{\mathbf{m}_1}-1\in\mathfrak{m}_{X,p}$, $p$ is not a torus-fixed point and $\sigma$ is two-dimensional. Let $\sigma=\RR_{\geq0}^2\begin{bmatrix}n_1 & n_2 & n_3 \\ r_1 & r_2 & r_3\end{bmatrix}$. Note that $\mathbf{m}_1$ is a basis for the kernel of $\begin{bmatrix}n_1 & n_2 & n_3 \\ r_1 & r_2 & r_3\end{bmatrix}$, and $\langle\mathbf{n},\mathbf{m}_i\rangle=0$ for $i=1,2,3$. After possibly replacing $\mathbf{m}_2$ with $-\mathbf{m}_2$ and $\mathbf{m}_3$ with $-\mathbf{m}_3$, we may assume $\mathbf{m}_2,\mathbf{m}_3\in\mathsf{S}_\sigma=\NN^3+\ZZ \mathbf{m}_1$. As in \vref{P:principal}, the affine semigroup is a product $\mathsf{S}_\sigma\cong\mathsf{S}\times\ZZ\mathbf{m}_1$ where $\mathsf{S}$ is the image of $\NN^3$ in the quotient $\ZZ^3/\ZZ\mathbf{m}_1$. Thus, $\Bbbk[\mathsf{S}_\sigma]\cong\Bbbk[\mathsf{S}][t_3]_{t_3+1}$ where $t_3=\mathbf{x}^{\mathbf{m}_1}-1$.
\end{proof}

Recall $\mathbf{q}=\begin{bmatrix}q_1 & q_2 & 0\end{bmatrix}\in\NN^3$ is the primitive vector such that $\langle\mathbf{q},\mathbf{m}_1\rangle=0$ and $e_i=\ord_\mathbf{q}(f_i)$ for $i=1,2,3$.  Here is the improvement to the main theorem of Thompson~\cite{MR3168880}.

\begin{prop} \label{P:main}
  \begin{enumerate}[(i)]
    \item If $I$ is a complete intersection or if $e_2(d_3-d_1)\leq e_1(d_3-d_2)$, then 
      \[
        \multideal(I^\lambda)=I^{\left(\lfloor\lambda-1\rfloor\right)}\cap\multideal(\termideal^\lambda)\cap\left(f\mid\nu_1(f)\geq\lfloor a_1\lambda-k_1\rfloor\right)
      \]
      where $\nu_1$ is the valuation given by the generating sequence $x_i\mapsto n_i$ for $i=1,2,3$, $f_1\mapsto d_2$. Thus, $a_1=\nu_1(I)=d_2$ and $k_1=\nu_1(J_{R_{\nu_1}/\Bbbk[x]})=n_1+n_2+n_3+d_2-d_1$ where $J_{R_{\nu_1}/\Bbbk[x]}$ is the Jacobian of the discrete valuation ring $R_{\nu_1}$ of $\nu_1$ over $\Bbbk[x]$.
    \item Otherwise, 
      \[
        \multideal(I^\lambda)=I^{\left(\lfloor\lambda-1\rfloor\right)}\cap\multideal(\termideal^\lambda)\bigcap_{i=1,2}\left(f\mid\nu_i(f)\geq\lfloor a_i\lambda-k_i\rfloor\right)
      \]
      where $\nu_1$ is as before and $\nu_2$ is given by the generating sequence $x_1\mapsto e_2n_1+(d_3-d_2)q_1$, $x_2\mapsto e_2n_2+(d_3-d_2)q_2$, $x_3\mapsto e_2n_3$, $f_1\mapsto e_2d_3$. Thus, $a_2=\nu_2(I)=e_2d_3$, and $k_2=\nu_2(J_{R_{\nu_2}/\Bbbk[x]})=e_2(n_1+n_2+n_3)+(d_3-d_1)(q_1+q_2)+e_2(d_3-d_1)-e_1(d_3-d_2)$ where $J_{R_{\nu_2}/\Bbbk[x]}$ is the Jacobian of the discrete valuation ring $R_{\nu_2}$ of $\nu_2$ over $\Bbbk[x]$.
  \end{enumerate}
\end{prop}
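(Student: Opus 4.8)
The plan is to promote the locally monomial model of \vref{P:curvefix} to a factorizing resolution, peel off the symbolic power using \vref{P:b-ideal}, and compute what remains combinatorially by reduction to the toric case; take the pair to be $(\affspace^3,0)$ throughout. First I would fix the toric variety $X=X_\Sigma$ of \vref{P:curvefix}, choose a toric resolution $\widetilde{X}\to X$, and blow up the embedded component of $I\cdot\sheaf{O}_{\widetilde{X}}$ --- supported, by the proof of \vref{P:curvefix}, on the intersection of the strict transform of $V(f_1)$ with $D_\rho$ --- to obtain $\pi\colon X'\to\affspace^3$. The local product descriptions $\Bbbk[\mathsf{S}_\sigma]\cong\Bbbk[\mathsf{S}][t_3]_{t_3+1}$ constructed there show that near every point of $X'$ the ideal sheaf $I\cdot\sheaf{O}_{X'}$, the ideal $\sheaf{I}_{\widetilde{C}}$, and the invertible sheaf $\sheaf{L}$ with $I\cdot\sheaf{O}_{X'}=\sheaf{I}_{\widetilde{C}}\cdot\sheaf{L}$ are simultaneously torus invariant for a toric chart; in particular $\pi$ is a factorizing resolution of $C$. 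Since $C$ is irreducible of codimension $2$, \vref{P:b-ideal} gives
\[
  \multideal(I^\lambda)=\multideal(\mathfrak{b}^\lambda)\cap I^{(\lfloor\lambda-1\rfloor)},\qquad\mathfrak{b}=\pi_*\sheaf{L},
\]
which already produces the factor $I^{(\lfloor\lambda-1\rfloor)}$.

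Next I would compute $\multideal(\mathfrak{b}^\lambda)$. As $\sheaf{L}$ is locally monomial, \vref{P:toroidal} says its multiplier ideals are cut out, one divisor at a time, by the Rees divisors of $\sheaf{L}$, and \vref{P:Blickle} applied chart by chart turns each such divisor $E$ into the condition $\ord_E(f)\geq\lfloor\lambda\,\ord_E(I)\rfloor-\ord_E(K_{X'/\affspace^3})$. These divisors are exactly the facet directions of the Newton polyhedra of the monomial ideals $I\cdot\sheaf{O}_{U_\rho}$ and $I\cdot\sheaf{O}_{U_\sigma}$ written down in the proof of \vref{P:curvefix}. Away from the at most two points where $\overline{V(\mathbf{x}^{\mathbf{m}_1}-1,\mathbf{x}^{\mathbf{m}_\rho})}$ meets $X\setminus U_\rho$ one has $I\cdot\sheaf{O}_X=\termideal\cdot\sheaf{O}_X$, so the facets supported there reproduce $\multideal(\termideal^\lambda)$. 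On $U_\rho$ the monomial ideal $I\cdot\sheaf{O}_{U_\rho}$ expressed in $\mathbf{x}^{\mathbf{m}_\rho}$, $\mathbf{x}^{\mathbf{m}_1}-1$ and $\mathbf{x}^{\mathbf{m}_2}-1$ has one further ``diagonal'' facet, and its weight vector is the valuation $\nu_1$ of the statement; at each of the two exceptional points there is at most one additional diagonal facet.

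The heart of the matter --- and the step I expect to be the main obstacle --- is the analysis at those two points together with the proof of the dichotomy. At such a point $p$, lying in a two-dimensional cone $\sigma=\RR_{\geq0}\mathbf{n}+\RR_{\geq0}\mathbf{r}$ of $\Sigma$, one pins down $\mathbf{r}$ (up to the usual sign and lattice choices) in terms of $\mathbf{n}$, $\mathbf{q}$ and the numbers $d_i,e_i$ via $\langle\mathbf{q},\mathbf{m}_1\rangle=0$ and $e_i=\ord_{\mathbf{q}}(f_i)$, computes the Newton polyhedron of $I\cdot\sheaf{O}_{U_\sigma}$ in the coordinates of the product $\spec\Bbbk[\mathsf{S}]\times\spec\Bbbk[t_3]_{t_3+1}$, and checks when its diagonal facet is already cut off by the facets coming from $D_\rho$, from the strict transform of $V(f_1)$, and from $\nu_1$. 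This redundancy comes down to a comparison of facet slopes that unwinds to the inequality $e_2(d_3-d_1)\leq e_1(d_3-d_2)$: when it holds, the diagonal facet is redundant at both points and only $\nu_1$ survives --- in particular this is automatic when $I$ is a complete intersection, where $f_3$ is absent; when it fails, exactly one of the two diagonal facets is needed, and its weight vector is the valuation $\nu_2$ of the statement.

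Finally, in each surviving case I would read $\nu_i(x_j)$ and $\nu_i(f_1)$ off the corresponding facet's weight vector, recovering the stated generating sequences; a direct check shows $\nu_i(f_1)$ attains the minimum over a generating set, so that $a_i=\nu_i(I)=\nu_i(f_1)$; and $k_i=\nu_i(J_{R_{\nu_i}/\Bbbk[x]})$ is obtained by adding the relative canonical contributions along the tower of toric and weighted blowups that realizes $\nu_i$ (first the normalized blowup of $\mathfrak{a}_1\mathfrak{a}_2$, then the weighted blowup extracting $\nu_i$) --- a sequence of weighted-blowup discrepancy computations. Feeding these into the divisor-by-divisor description gives $\multideal(\mathfrak{b}^\lambda)=\multideal(\termideal^\lambda)\cap\bigcap_i\bigl(f\mid\nu_i(f)\geq\lfloor a_i\lambda-k_i\rfloor\bigr)$, and intersecting with $I^{(\lfloor\lambda-1\rfloor)}$ yields both parts of the proposition. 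For the complete intersection case one may instead, as noted before \vref{P:curvefix}, deduce the result from \vref{P:principal} by adjunction and inversion of adjunction.
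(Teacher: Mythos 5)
Your proposal is correct and takes essentially the same route as the paper: the paper's proof is literally ``Apply \vref{P:curvefix} and the convex geometry computation in the appendix,'' and your pipeline --- turning the locally monomial model of \vref{P:curvefix} into a factorizing resolution, invoking \vref{P:b-ideal} to extract $I^{(\lfloor\lambda-1\rfloor)}$, and using \vref{P:toroidal} together with the Howald--Blickle formula and the Newton-polyhedron facet analysis to identify $\nu_1$, $\nu_2$, and the dichotomy governed by $e_2(d_3-d_1)\leq e_1(d_3-d_2)$ --- is exactly the argument the appendix carries out. The only quibble is your closing aside: the adjunction shortcut via \vref{P:principal} is what the paper offers for curves lying on a \emph{smooth} toric surface (a case excluded from the setup), not for general complete intersections, which are instead covered by the $r_2=0$ branch of the appendix computation.
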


\begin{proof}
  Apply \vref{P:curvefix} and the convex geometry computation in the appendix.
\end{proof}

\begin{cor}
  \begin{enumerate}[(i)]
    \item If $I$ is a complete intersection or if $e_2(d_3-d_1)\leq e_1(d_3-d_2)$, then the log canonical threshold of $I$ (at the origin) is
      \[
        \lct_0(I)=\min\left(\lct_0(\termideal),\frac{k_1+1}{a_1}\right).
      \]
    \item Otherwise, 
      \[
        \lct_0(I)=\min\left(\lct_0(\termideal),\frac{k_1+1}{a_1},\frac{k_2+1}{a_2}\right).
      \]
  \end{enumerate}
\end{cor}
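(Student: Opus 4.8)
The plan is to read $\lct_0(I)$ directly off the explicit description of $\multideal(I^\lambda)$ provided by \vref{P:main}. Write $\sheaf{O}=\sheaf{O}_{\affspace^3,0}$ and $\mathfrak{m}=\mathfrak{m}_0$ for its maximal ideal. Recall that for any ideal $\mathfrak{a}$ one has $\lct_0(\mathfrak{a})=\sup\{\lambda>0\mid \multideal(\affspace^3,\mathfrak{a}^\lambda)\sheaf{O}=\sheaf{O}\}$, with the supremum unattained, so that $\multideal(\affspace^3,\mathfrak{a}^\lambda)\sheaf{O}=\sheaf{O}$ precisely when $\lambda<\lct_0(\mathfrak{a})$. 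Since \vref{P:main} exhibits $\multideal(I^\lambda)$ as a finite intersection of ideals, and localization at $\mathfrak{m}$ commutes with finite intersection, $\multideal(I^\lambda)\sheaf{O}=\sheaf{O}$ exactly when the extension to $\sheaf{O}$ of each factor is $\sheaf{O}$. So it suffices to determine, for each of the three kinds of factor occurring there, the threshold in $\lambda$ below which its extension to $\sheaf{O}$ equals $\sheaf{O}$; then $\lct_0(I)$ is the minimum of these thresholds.

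I would treat the factors one by one. For the symbolic power $I^{(\lfloor\lambda-1\rfloor)}$: since the origin lies on $C=V(I)$, the extension $I^{(m)}\sheaf{O}$ equals $\sheaf{O}$ for $m\le 0$ and is a proper ideal for $m\ge 1$, so the threshold is the value of $\lambda$ at which $\lfloor\lambda-1\rfloor$ first reaches $1$, namely $2$. For $\multideal(\termideal^\lambda)$: by the description of the log canonical threshold recalled above, its extension to $\sheaf{O}$ is $\sheaf{O}$ iff $\lambda<\lct_0(\termideal)$. For a factor $(f\mid\nu_i(f)\ge\lfloor a_i\lambda-k_i\rfloor)$: the valuation $\nu_i$ is centered at the origin (its generating sequence assigns a positive integer to each $x_j$), so $\nu_i(1)=0$, and the extension to $\sheaf{O}$ equals $\sheaf{O}$ iff $\lfloor a_i\lambda-k_i\rfloor\le 0$, i.e. iff $a_i\lambda-k_i<1$, i.e. iff $\lambda<\tfrac{k_i+1}{a_i}$. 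Combining these via \vref{P:main}, one obtains
\[
  \lct_0(I)=\min\!\left(2,\ \lct_0(\termideal),\ \tfrac{k_1+1}{a_1}\right)
\]
in case (i), and the same with the additional term $\tfrac{k_2+1}{a_2}$ in case (ii).

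It remains to delete the leading $2$, i.e. to verify that the minimum of the remaining terms is already strictly less than $2$. Under the standing hypothesis, no $n_i$ lies in the numerical semigroup generated by the other two: if $n_i=an_j+bn_k$ with $a,b\ge 0$, then $x_i-x_j^ax_k^b\in I$, exhibiting $C$ inside the smooth toric surface $V(x_i-x_j^ax_k^b)$, contrary to assumption. Consequently each binomial $f_i=\mathbf{x}^{\mathbf{m}_i^+}-\mathbf{x}^{\mathbf{m}_i^-}$ has both of its (coprime) terms of total degree at least $2$, since a term of degree $1$ would be some $x_i$, forcing $x_i-\mathbf{x}^{\mathbf{m}_i^\mp}\in I$ and hence $n_i\in\langle n_j,n_k\rangle$. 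Therefore every generator of $\termideal$ lies in $\mathfrak{m}^2$, so $\termideal\subseteq\mathfrak{m}^2$ and $\lct_0(\termideal)\le\lct_0(\mathfrak{m}^2)=\tfrac{3}{2}$. As $\lct_0(\termideal)$ is one of the terms of the minimum computed above, that minimum is at most $\tfrac{3}{2}<2$, the term $2$ is redundant, and the formula of the corollary follows.

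The first two paragraphs are routine once \vref{P:main} is granted, together with the standard fact relating the log canonical threshold to triviality of multiplier ideals near the point. The one step carrying content is the last: the passage from the geometric hypothesis on $C$ to the combinatorial statement that no $n_i$ is a nonnegative combination of the other two, and thence to the inclusion $\termideal\subseteq\mathfrak{m}^2$. I expect that step to be the main obstacle — modest in size, but it is the only place where the geometric hypothesis is genuinely used to pin down the log canonical threshold.
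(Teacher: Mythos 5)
Your proof is correct and follows the only natural route: the paper states this corollary without proof as an immediate consequence of Proposition~\ref{P:main}, namely reading off, for each factor of the intersection, the infimum of $\lambda$ at which that factor becomes nontrivial at the origin, exactly as you do. Your final step --- showing no $n_i$ lies in the semigroup generated by the other two, hence $\termideal\subseteq\mathfrak{m}^2$ and $\lct_0(\termideal)\le\tfrac32$, so that the threshold $2$ contributed by $I^{(\lfloor\lambda-1\rfloor)}$ is redundant --- is a point the paper silently omits, and you handle it correctly.
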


Note that when $e_2(d_3-d_1)=e_1(d_3-d_2)$, $\nu_2$ is monomial in the $\mathbf{x}$-variables and both formulas apply. In Example~5.3 of Blanco and Encinas~\cite{Blanco2014}, $e_2(d_3-d_1)=e_1(d_3-d_2)$, $\nu_2$. I do not know an example where $e_2(d_3-d_1)>e_1(d_3-d_2)$. A Macaulay2 package that implements this calculation, as presented in Thompson~\cite{MR3168880}, is described in Teitler~\cite{Teitler2013}.

\section*{Appendix}

Recall that $d_i=\ord_\mathbf{n}(f_i)$ and let $\mathbf{u}_i=\begin{bmatrix}d_i \\ e_i\end{bmatrix}=\begin{bmatrix}n_1 & n_2 & n_3 \\ r_1 & r_2 & r_3\end{bmatrix}\mathbf{m}_i^-$ for $i=1,2,3$. In the local monomial coordinates, we find the Rees valuations from the facets of the Newton polyhedron. It suffices to consider the ideal $\left(\mathbf{t}^{u_1}t_3,\mathbf{t}^{u_2},\mathbf{t}^{u_3}\right)\subset\Bbbk[\mathsf{S}]ty_3]$ (see the conclusion of the proof of \vref{P:main}). We know $e_1>e_2$ by examining Section~3 of Shibuta and Takagi~\cite{MR2533766}. And, $r_2=0$ or $r_3=0$. 

If  $r_2=0$, then $e_2=0$, $\mathbf{t}^{\mathbf{u}_3}\in(\mathbf{t}^{\mathbf{u}_2})$, and the facets of the Newton polyhedron $\newt\left(\mathbf{t}^{\mathbf{u}_1}t_3,\mathbf{t}^{\mathbf{u}_2}\right)$ are orthogonal to the rows of the matrix
\[
  \begin{bmatrix}
    1 & 0 & 0 \\
    0 & 1 & 0 \\
    0 & 0 & 1 \\
    e_1 & d_2-d_1 & 0 \\
    1 & 0 & d_2-d_1
  \end{bmatrix}
\]
This includes the complete intersection case. Note that the only two rows of our matrix that have a nonzero last entry are $\begin{bmatrix}0 & 0 & 1\end{bmatrix}$ and $\begin{bmatrix}1 & 0 & d_2-d_1\end{bmatrix}$. The other vectors correspond to valuations that are monomial in the original $\mathbf{x}$-variables. Our ideal $\left(\mathbf{t}^{\mathbf{u}_1}t_3,\mathbf{t}^{\mathbf{u}_2},\mathbf{t}^{\mathbf{u}_3}\right)$ has order zero on the valuation corresponding to $\begin{bmatrix}0 & 0 & 1\end{bmatrix}$. And, the row $\begin{bmatrix}1 & 0 & d_2-d_1\end{bmatrix}$ corresponds to $\nu_1$.

If $\mathbf{t}^{\mathbf{u}_2}\notin\overline{(\mathbf{t}^{\mathbf{u}_1},\mathbf{t}^{\mathbf{u}_3})}$ and $e_2\neq0$, then the facets of the Newton polyhedron $\newt\left(\mathbf{t}^{\mathbf{u}_1}t_3,\mathbf{t}^{\mathbf{u}_2},\mathbf{t}^{\mathbf{u}_3}\right)$ are orthogonal to the rows of the matrix
\[
  \begin{bmatrix}
    1 & 0 & 0 \\
    0 & 1 & 0 \\
    0 & 0 & 1 \\
    e_1-e_2 & d_2-d_1 & 0 \\
    1 & 0 & d_2-d_1 \\ 
    e_2 & d_3-d_2 & 0
  \end{bmatrix}
\]
and these rows all have nonnegative integer entries. In terms of the parameters introduced in Section~3 of Shibuta and Takagi~\cite{MR2533766}, $\alpha\leq\gamma$ in this case.

If $\mathbf{t}^{\mathbf{u}_2}\in\overline{(\mathbf{t}^{\mathbf{u}_1},\mathbf{t}^{\mathbf{u}_3})}$ and $e_2\neq0$, then $r_3=0$ and the facets of the Newton polyhedron $\newt\left(\mathbf{t}^{\mathbf{u}_1}t_3,\mathbf{t}^{\mathbf{u}_2},\mathbf{t}^{\mathbf{u}_3}\right)$ are orthogonal to the rows of the matrix
\[
  \begin{bmatrix}
    1 & 0 & 0 \\
    0 & 1 & 0 \\
    0 & 0 & 1 \\
    e_1-e_3 & d_3-d_1 & 0 \\
    1 & 0 & d_2-d_1 \\
    e_2 & d_3-d_2 & e_2(d_3-d_1)-e_1(d_3-d_2)
  \end{bmatrix}
\]
and these rows all have nonnegative integer entries. Note that the only three rows that have a nonzero last entry are $\begin{bmatrix}0 & 0 & 1\end{bmatrix}$, $\begin{bmatrix}1 & 0 & d_2-d_1\end{bmatrix}$, and
\[
  \begin{bmatrix}e_2 & d_3-d_2 & e_2(d_3-d_1)-e_1(d_3-d_2)\end{bmatrix}
\]
corresponding to the only bounded facet of  $\newt\left(\mathbf{t}^{\mathbf{u}_1}t_3,\mathbf{t}^{\mathbf{u}_2},\mathbf{t}^{\mathbf{u}_3}\right)$. The other vectors correspond to valuations that are monomial in the original $\mathbf{x}$-variables. And, the bounded facet corresponds to $\nu_2$. For $\nu_2$, the orders of vanishing of the $x$-variables are given by the entries of
\begin{multline*}
  \begin{bmatrix}
    e_2 & d_3-d_2 & e_2(d_3-d_1)-e_1(d_3-d_2)
  \end{bmatrix}
  \begin{bmatrix}
    n_1 & n_2 & n_3 \\ 
    q_1 & q_2 & 0   \\
    0   & 0   & 0
  \end{bmatrix} \\
  =
  \begin{bmatrix}
    e_2n_1+(d_3-d_2)q_1 & e_2n_2+(d_3-d_2)q_2 & e_2n_3
  \end{bmatrix}
\end{multline*}
and $\nu_2(f_i)=e_2d_3$ for all $i=1,2,3$
\begin{multline*}
  \begin{bmatrix}
    e_2 & d_3-d_2 & e_2(d_3-d_1)-e_1(d_3-d_2)
  \end{bmatrix}
  \begin{bmatrix}
    d_1 & d_2 & d_3 \\ 
    e_1 & e_2 & 0   \\
    1   & 0   & 0
  \end{bmatrix} \\
  =
  \begin{bmatrix}
    e_2d_3 & e_2d_3 & e_2d_3
  \end{bmatrix}
\end{multline*}

\begin{bibdiv}
\begin{biblist}

\bib{Alberich-Carraminana2014}{arxiv}{
   author={Alberich-Carrami\~{n}ana, Maria},
   author={\`{A}lvarez Montaner, Josep},
   author={Dachs-Cadefau, Ferran},
   title={Multiplier ideals in two-dimensional local rings with rational singularities},
   date={2014},
   pages={32},
   eprint={http://arxiv.org/abs/1412.3605},
   article-id={1412.3605v1},
   category={math.AG},
}

\bib{Blanco2014}{arxiv}{
   author={Blanco, Roc{\'\i}o},
   author={Encinas, Santiago},
   title={A procedure for computing the log canonical threshold of a binomial ideal},
   date={2014},
   pages={28},
   eprint={http://arxiv.org/abs/1405.3942},
   article-id={1405.3942v2},
   category={math.AG},
}

\bib{MR3031261}{article}{
   author={Bravo, Ana},
   title={A remark on strong factorizing resolutions},
   journal={Rev. R. Acad. Cienc. Exactas F\'\i s. Nat. Ser. A Math. RACSAM},
   volume={107},
   date={2013},
   number={1},
   pages={53--60},
   issn={1578-7303},
   review={\MR{3031261}},
   doi={10.1007/s13398-012-0080-8},
}

\bib{MR2092724}{article}{
   author={Blickle, Manuel},
   title={Multiplier ideals and modules on toric varieties},
   journal={Math. Z.},
   volume={248},
   date={2004},
   number={1},
   pages={113--121},
   issn={0025-5874},
   review={\MR{2092724 (2006a:14082)}},
   doi={10.1007/s00209-004-0655-y},
}

\bib{Eisenstein2010}{arxiv}{
   author={Eisenstein, Eugene},
   title={Generalizations of the restriction theorem for multiplier ideals},
   date={2010},
   pages={17},
   eprint={http://arxiv.org/abs/1001.2841},
   article-id={1001.2841v1},
   category={math.AG},
}

\bib{MR2671187}{article}{
   author={Galindo, Carlos},
   author={Monserrat, Francisco},
   title={The Poincar\'e series of multiplier ideals of a simple complete
   ideal in a local ring of a smooth surface},
   journal={Adv. Math.},
   volume={225},
   date={2010},
   number={2},
   pages={1046--1068},
   issn={0001-8708},
   review={\MR{2671187 (2012a:14039)}},
   doi={10.1016/j.aim.2010.03.008},
}

\bib{MR1892938}{article}{
   author={Gonz{\'a}lez P{\'e}rez, Pedro Daniel},
   author={Teissier, Bernard},
   title={Embedded resolutions of non necessarily normal affine toric
   varieties},
   language={English, with English and French summaries},
   journal={C. R. Math. Acad. Sci. Paris},
   volume={334},
   date={2002},
   number={5},
   pages={379--382},
   issn={1631-073X},
   review={\MR{1892938 (2003b:14019)}},
   doi={10.1016/S1631-073X(02)02273-2},
}

\bib{MR1828466}{article}{
   author={Howald, J. A.},
   title={Multiplier ideals of monomial ideals},
   journal={Trans. Amer. Math. Soc.},
   volume={353},
   date={2001},
   number={7},
   pages={2665--2671 (electronic)},
   issn={0002-9947},
   review={\MR{1828466 (2002b:14061)}},
   doi={10.1090/S0002-9947-01-02720-9},
}

\bib{Howald2003}{arxiv}{
   author={Howald, J. A.},
   title={Multiplier Ideals of Sufficiently General Polynomials},
   date={2003},
   pages={9},
   eprint={http://arxiv.org/abs/math/0303203},
   article-id={math/0303203v1},
   category={math.AG},
}

\bib{MR2844818}{article}{
   author={Hyry, Eero},
   author={J{\"a}rvilehto, Tarmo},
   title={Jumping numbers and ordered tree structures on the dual graph},
   journal={Manuscripta Math.},
   volume={136},
   date={2011},
   number={3-4},
   pages={411--437},
   issn={0025-2611},
   review={\MR{2844818}},
   doi={10.1007/s00229-011-0449-6},
}

\bib{MR2470185}{article}{
   author={Naie, Daniel},
   title={Jumping numbers of a unibranch curve on a smooth surface},
   journal={Manuscripta Math.},
   volume={128},
   date={2009},
   number={1},
   pages={33--49},
   issn={0025-2611},
   review={\MR{2470185 (2009j:14034)}},
   doi={10.1007/s00229-008-0223-6},
}

\bib{MR3035120}{article}{
   author={Naie, Daniel},
   title={Mixed multiplier ideals and the irregularity of abelian coverings
   of smooth projective surfaces},
   journal={Expo. Math.},
   volume={31},
   date={2013},
   number={1},
   pages={40--72},
   issn={0723-0869},
   review={\MR{3035120}},
   doi={10.1016/j.exmath.2012.08.005},
}

\bib{MR2533766}{article}{
   author={Shibuta, Takafumi},
   author={Takagi, Shunsuke},
   title={Log canonical thresholds of binomial ideals},
   journal={Manuscripta Math.},
   volume={130},
   date={2009},
   number={1},
   pages={45--61},
   issn={0025-2611},
   review={\MR{2533766}},
   doi={10.1007/s00229-009-0270-7},
}
    
\bib{MR2389246}{article}{
   author={Smith, Karen E.},
   author={Thompson, Howard M.},
   title={Irrelevant exceptional divisors for curves on a smooth surface},
   conference={
      title={Algebra, geometry and their interactions},
   },
   book={
      series={Contemp. Math.},
      volume={448},
      publisher={Amer. Math. Soc., Providence, RI},
   },
   date={2007},
   pages={245--254},
   review={\MR{2389246 (2009c:14004)}},
   doi={10.1090/conm/448/08669},
}

\bib{Teitler2013}{arxiv}{
   author={Teitler, Zach},
   title={Software for multiplier ideals},
   date={2013},
   pages={7},
   eprint={http://arxiv.org/abs/1305.4435},
   article-id={1305.4435v1},
   category={math.AG},
}

\bib{Thompson2003}{arxiv}{
   author={Thompson, Howard M.},
   title={Comments on toric varieties},
   date={2003},
   pages={6},
   eprint={http://arxiv.org/abs/math/0310336},
   article-id={0310336},
   category={math.AG},
}

\bib{MR3168880}{article}{
   author={Thompson, Howard M.},
   title={Multiplier ideals of monomial space curves},
   journal={Proc. Amer. Math. Soc. Ser. B},
   volume={1},
   date={2014},
   pages={33--41},
   issn={2330-1511},
   review={\MR{3168880}},
   doi={10.1090/S2330-1511-2014-00001-8},
}

\bib{MR2592954}{article}{
   author={Tucker, Kevin},
   title={Jumping numbers on algebraic surfaces with rational singularities},
   journal={Trans. Amer. Math. Soc.},
   volume={362},
   date={2010},
   number={6},
   pages={3223--3241},
   issn={0002-9947},
   review={\MR{2592954 (2011c:14106)}},
   doi={10.1090/S0002-9947-09-04956-3},
}

\end{biblist}
\end{bibdiv}

\end{document}